\newtheorem{theo}{\bf Theorem}[section]
\newtheorem{lemma}{\bf Lemma}[section]
\newtheorem{coro}{\bf Corollary}[section]
\newtheorem{propo}{\bf Proposition}[section]
\newcommand{\spn}{{\rm span}}
\newcommand{\Z}{{\mathbb Z}}
\newcommand{\N}{{\mathbb N}}
\newcommand{\Q}{{\mathbb Q}}
\newcommand{\R}{{\mathbb R}}
\newcommand{\bea}{\begin{eqnarray*}}
\newcommand{\eea}{\end{eqnarray*}}
\newcommand{\be}{\begin{eqnarray}}
\newcommand{\ee}{\end{eqnarray}}
\newcommand{\ve}{\boldsymbol}
\renewcommand{\mod}{\bmod}
\newcommand{\coneS}{{\mathcal S}}
\numberwithin{equation}{section}
\begin{document}

\title[Iterated Chv\'atal-Gomory cuts]
{Iterated Chv\'atal-Gomory Cuts\\ and the Geometry of Numbers}
\author{Iskander Aliev}
\address{School of Mathematics, Cardiff University, Cardiff, Wales, UK}
\email{alievi@cf.ac.uk}

\author{Adam Letchford}
\address{Department of Management Science, Lancaster University, Lancaster, UK}
\email{a.n.letchford@lancaster.ac.uk}

\date{Draft, 19th
June 2013}


\begin{abstract}
Chv\'atal-Gomory cutting planes (CG-cuts for short) are a fundamental tool in
Integer Programming. Given any single CG-cut, one can derive an entire family
of CG-cuts, by `iterating' its multiplier vector modulo one. This leads
naturally to two questions: first, which iterates correspond to the strongest
cuts, and, second, can we find such strong cuts efficiently? We answer the first
question empirically, by showing that one specific approach for selecting the iterate
tends to perform much better than several others. The approach essentially consists
in solving a nonlinear optimization problem over a special lattice associated with the
CG-cut. We then provide a partial answer to the second question, by presenting a
polynomial-time algorithm that yields an iterate that is strong in a certain well-defined
sense. The algorithm is based on results from the algorithmic geometry of numbers.
\end{abstract}

\keywords{integer programming, cutting planes, covering radius,  distribution of lattices}

\subjclass[2000]{Primary: 90C10; Secondary: 90C27 ,	52C17, 11H16, 11J71}

\maketitle

\section{Introduction}

Let ${\ve x} \in \Z^n$ be a vector of integer-constrained decision variables,
and let $A {\ve x} \le {\ve b}$ be a system of linear inequalities, where
$A \in \Z^{m \times n}$ and ${\ve b} \in \Z^m$. A {\em Chv\'atal-Gomory
cutting plane}, or {\em CG-cut}\/ for short, is a linear inequality of the
form
\begin{equation} \label{eq:CGC}
\left( {\ve \lambda}^T  A \right) {\ve x} \le
\left\lfloor {\ve \lambda}^T {\ve b} \right\rfloor,
\end{equation}
for some multiplier vector ${\ve \lambda} \in \R^m_{\ge 0}$ with
${\ve \lambda}^T  A \in \Z$.  (Here, $\lfloor \cdot \rfloor$ denotes rounding
down to the nearest integer. If ${\ve \lambda}^T {\ve b} \in \Z$, we call the
CG-cut (\ref{eq:CGC}) {\em trivial}.)

CG-cuts are so-called because they were derived by Chv\'atal \cite{C73}, based
on earlier work of Gomory \cite{G58, G63}. They form a fundamental family of
cutting planes for {\em Integer Linear Programs}\/ (ILPs); see, e.g.,
\cite{NW88,W98}.

A large number of papers have appeared that use CG-cuts either theoretically
or algorithmically. We survey some of them in Section \ref{se:literature}. One
well-known operation in the literature for creating new CG-cuts from old ones
is to take a multiplier vector ${\ve \lambda}$ and an integer $t$, and create
the new multiplier vector
$t {\ve \lambda} \mod 1:=t {\ve \lambda} - \lfloor t {\ve \lambda} \rfloor$.
(When $\lfloor \cdot \rfloor$ is applied to a vector, each component of the
vector is rounded down.) We call this operation `iterating modulo $1$'.

This leads naturally to two questions: first, which choices for the integer $t$
correspond to strong cuts, and, second, can we find such strong cuts efficiently?
In this paper, we answer the first question empirically, by showing that one
specific approach for selecting $t$ tends to perform much better than several others.
The approach essentially amounts to solving a nonlinear optimization problem over a
special lattice associated with the initial cut. To address the second question, we
first show that for a `typical' cut the covering radius of the associated lattice is small.
This result justifies using the covering radius for estimating the quality of the iterates.
We then provide a partial answer to the second question, by showing the existence
of a polynomial-time algorithm that computes an iterated CG-cut that is strong in a
certain well-defined sense. The algorithm is based on results from the algorithmic geometry
of numbers and computational Diophantine approximations.

The structure of the paper is as follows. The relevant literature is briefly
reviewed in the next section. In Section \ref{se:rules}, we describe several
rules, both known and new, for selecting the integer $t$, and study their
empirical performance. In Section \ref{se:random}, we study the properties of
the iterates for the case in which ${\ve \lambda}$ is random. The polynomial-time
algorithm mentioned above is presented in Section \ref{se:algorithm}. Finally,
some concluding remarks are made in Section~\ref{se:end}.

\section{Literature Review} \label{se:literature}

In this section, we review some relevant papers, introducing some useful
notation and terminology along the way.

\subsection{Gomory fractional cuts}

The original method of Gomory \cite{G58} was designed for ILPs of the form:
\[
\max \left\{ {\ve c}^Tx: \: C {\ve x} = {\ve d}, \:
{\ve x} \in \Z_+^n \right\},
\]
where ${\ve c} \in \Z^n$, $C \in \Z^{p \times n}$ and ${\ve d} \in \Z^p$.
The first step is to solve the {\em Linear Program}\/ (LP)
\[
\max \left\{ {\ve c}^T {\ve x}: \: C {\ve x} = {\ve d}, \:
{\ve x} \in \R_+^n \right\}
\]
by the simplex method. Let ${\ve x}^*$ be the optimal solution to this LP, and
suppose that $x^*_k \notin \Z$ for some $1 \le k \le n$. Then $x_k$ is basic,
and there exists a row of the simplex tableau of the form:
\begin{equation} \label{eq:row}
x_k + \sum_{i \in B} \alpha_i x_i = x_k^*,
\end{equation}
where $B$ is the set of non-basic variables. Rounding down each coefficient to
the nearest integer, we obtain the valid inequality:
\[
x_k + \sum_{i \in B} \lfloor \alpha_i \rfloor x_i \le \lfloor x_k^* \rfloor.
\]
Using the equation (\ref{eq:row}), this inequality can be written as:
\begin{equation} \label{eq:GFC}
\sum_{i \in B} \{ \alpha_i \} x_i \ge \{ x_k^* \},
\end{equation}
where $\{ r \}=r - \lfloor r \rfloor$ is
the fractional part of $r$. The inequality (\ref{eq:GFC})
has come to be known as the {\em Gomory fractional cut}. We will write
GF-cut for short.

Gomory (\cite{G63}, Section 4) pointed out that, by taking integral combinations of the
rows of the simplex tableau, one can create new equations, from which further
GF-cuts can be derived. In this way, he derived a `group' of GF-cuts. He
showed that, unless the original ILP possesses an unusual degree of symmetry,
then the group is {\em cyclic}, which means that the entire group can be
derived by taking integral multiples of one single equation in the tableau.

\subsection{Separation of Chv\'atal-Gomory cuts} \label{sub:lit-sep}

Returning to CG-cuts, define the polyhedron
\begin{equation} \label{P}
P=\left\{ {\ve x} \in \R^n: \:  A {\ve x} \le {\ve b} \right\},
\end{equation}
and let $P_I$ be the convex hull of $P \cap \Z^n$, i.e., the so-called
{\em integral hull}\/ of $P$. Chv\'atal \cite{C73} defined the
{\em elementary closure}\/ of the $P$, denoted by $P'$, as the convex set
that remains after all CG-cuts have been added. Clearly,
$P_I \subseteq P' \subseteq P$. Schrijver \cite{S80} showed that $P'$ is a
polyhedron, or, equivalently, that a finite subset of the CG-cuts dominates
all others.

Now we consider the separation problem for CG-cuts. If $P$ is pointed and
${\ve x}^*$ is a fractional extreme point of $P$, then one can generate a
violated CG-cut via the following four-step procedure: (i) add slack variables
to convert the inequality system $A {\ve x} \le {\ve b}$ into an equation system,
(ii) express ${\ve x}^*$ as a basic feasible solution to that equation system,
(iii) generate a GF-cut, and (iv) convert the GF-cut into a CG-cut by eliminating
slack variables. (For details, see, e.g., Sect.~II.1.3 of \cite{NW88}.) For
general ${\ve x}^*$, however, separation over $P'$ is $NP$-hard (Eisenbrand
\cite{E99}). Fischetti and Lodi \cite{FL07} present an integer programming approach
for separating over $P'$ in practice. Fast separation heuristics have been
presented, for example, in \cite{CF96,CFL00,L02}.

\subsection{Cut strengthening} \label{sub:lit-strengthen}

GF-cuts and CG-cuts may induce facets of $P_I$ in certain cases (see again \cite{CF96,CFL00}).
In general, however, the GF-cuts generated by Gomory's method, or the CG-cuts generated
by existing separation heuristics, can be rather weak. There is a considerable literature on
the derivation of general families of valid linear inequalities which dominate the GF-cuts
and/or CG-cuts (e.g., \cite{CKS90,DG06,G63b,LL02,NW88,NW90}). The drawback of the
inequalities described in those papers is that their coefficients are typically numerically
less stable than those of GF-cuts and CG-cuts. (Recall that CG-cuts have integer coefficients
by definition, and that any GF-cut can be written as a CG-cut.)

An alternative way to address the issue of cut weakness is to develop procedures which take
one or more vectors ${\ve \alpha}\mod 1$ (or, equivalently, one or more multiplier vectors
${\ve \lambda}$), and attempt to construct another vector with more desirable
properties. (Here, ${\ve \alpha}$ is the vector with components $\alpha_i$ from (\ref{eq:row}).)
Here are three examples of such procedures:
\begin{itemize}
\item Gomory (\cite{G63}, Section 5) pointed out that, if $\{ x_i^* \} < 1/2$, then one can
obtain a GF-cut that is at least as strong as the original, by multiplying the
equation (\ref{eq:row}) by the largest positive integer $t$ such that
$1/2 \le t \{ x_i^* \} < 1$.
\item For the same case, Letchford and Lodi \cite{LL02} suggested instead to
multiply the equation (\ref{eq:row}) by $-1$.
\item Ceria {\em et al.}\ \cite{CCD95} gave a heuristic, based on solving systems
of linear congruences, to find a member of the group of GF-cuts with as many zero
left-hand side coefficients as possible.
\end{itemize}
We follow the same approach in this paper, but use more sophisticated algorithmic tools.

We remark that sequences $t {\ve \lambda}\mod 1$ have been investigated in
a completely different context, that of the {\em method of good lattice points}\/
in numerical integration. See, e.g., \cite{Hlawka,Korobov,Sloan}. We remark also
that this is not the first paper to apply tools from the geometry of numbers to
integer programming; see the survey \cite{ESurvey}.

\section{Rules for Finding a Good Iterate} \label{se:rules}

In this section, we examine various rules for finding a good iterated CG-cut, or, equivalently,
for selecting the integer $t$. Throughout this
section, and in the following two, we make an important assumption. Let
${\ve x}^* \in P \setminus P'$ be a fractional point that we wish to separate, and let
${\ve \lambda}$ be an initial multiplier vector. The assumption is that
${\ve \lambda}^T ({\ve b} - A{\ve x}^*) = {\ve 0}$, i.e., that all
inequalities with a positive multiplier have zero slack at ${\ve x^*}$. This assumption
holds, for example, when ${\ve x}^*$ is an extreme point of $P$ and the CG-cut has
been generated by the four-step procedure mentioned in Subsection \ref{sub:lit-sep}.
It also holds when the CG-cut has been generated using the separation heuristics in
\cite{CFL00,L02}. It has the important implication that, regardless of the integer $t$,
every non-trivial iterated CG-cut will be violated by ${\ve x}^*$.

In the following three subsections, we present some useful notation, describe six specific
rules for selecting an iterate, and present some preliminary computational results.

\subsection{Some useful notation}

It follows from results in Schrijver \cite{S80} that we can assume, without loss of generality,
that ${\ve \lambda}$ is rational.
Furthermore, the CG-cut $\left( {\ve \lambda}^T  A \right) {\ve x} \le
\left\lfloor {\ve \lambda}^T {\ve b} \right\rfloor$ is implied by $A{\ve x}\le {\ve b}$ and the CG-cut $\left( ( {\ve \lambda} \mod 1)^T  A \right) {\ve x} \le
\left\lfloor (  {\ve \lambda} \mod 1)^T {\ve b} \right\rfloor$. Thus we may also assume that ${\ve \lambda}\in [0,1)^m$.

Therefore we can write
\be
{\ve \lambda} =
\left(\frac{p_1}{q}, \frac{p_2}{q}, \ldots, \frac{p_m}{q}\right)^T\,,
\label{standard_lambda}
\ee
where $q$ is a positive integer and $p_1, p_2,\ldots, p_m$ are non-negative
integers with $\gcd(p_1, p_2,\ldots, p_m,q)=1$. Then, for any integer
$1 \le t < q$, the inequality
\begin{equation} \label{eq:CG-t}
\left( ( t {\ve \lambda} \mod 1)^T  A \right) {\ve x} \le
\left\lfloor ( t {\ve \lambda} \mod 1)^T {\ve b} \right\rfloor
\end{equation}
is a (possibly trivial) iterated CG-cut.

The family of iterated CG-cuts formed in this way is analogous to the group of GF-cuts
described by Gomory, or, more precisely, to the subgroup of GF-cuts that can be derived
by taking integer multiples of one single row of the tableau. Note that $q$ can be
exponentially large, and so can the family of iterated CG-cuts.

At this point, it is helpful to define the {\em slack vector}\/
${\ve s} = {\ve b} - A{\ve x}$ and the {\em rounding effect}\/
$\nu = \left\{ {\ve \lambda}^T {\ve b} \right\}$. Then, the iterated CG-cut
(\ref{eq:CG-t}) can be written in the alternative form:
\begin{equation} \label{eq:CG-t-slack}
(t {\ve \lambda} \mod 1)^T {\ve s} \ge \left\{ t \nu \right\}.
\end{equation}
Now, since we are assuming that ${\ve \lambda}^T {\ve s} = {\ve 0}$ at ${\ve x}^*$, the
left-hand side of (\ref{eq:CG-t-slack}) at ${\ve x}^*$ will be zero. This means that,
provided that an iterated CG-cut is not trivial, it will be violated by ${\ve x}^*$.

\subsection{Six specific rules}

Now we consider how to select the integer $t$. A trivial strategy, which we call
{\em Strategy 0}, is to select $t=1$. As mentioned in Subsection \ref{sub:lit-strengthen},
Gomory \cite{G63} suggested to set $t=1$ if $\nu < 1/2$, but to the largest integer such
that $t \nu < 1$ otherwise; and Letchford and Lodi \cite{LL02} suggested to set $t=1$ if
$\nu < 1/2$, but to $-1$ otherwise. We will call these approaches {\em Strategy 1} and
{\em Strategy 2}, respectively. Another approach, that we call {\em Strategy 3},
is to select an integer $t$ such that the right-hand side of (\ref{eq:CG-t-slack}) is
maximised.

The previous three strategies are concerned only with making the right-hand side of
(\ref{eq:CG-t-slack}) (rounding effect) large. It is also desirable for the left-hand side
to have small norm. In this paper we propose to optimize these two quantities {\em simultaneously}.
We consider two strategies, {\em multiplicative} and {\em additive}, to ensure that the norm
of the multiplier vector is small, but the rounding effect is large.

The multiplicative strategy attempts to minimise the ratio
\[
||t {\ve \lambda} \mod 1||/\{t \nu \}
\]
over all iterations with positive rounding effect $\{t\nu\}$. Here $||\cdot||$ denotes the Euclidean norm.
That is, we are solving the
following optimization problem:
\be
\min \left\{ ||t {\ve \lambda} \mod 1||/\{t \nu \}: \;
t=1,\ldots,q-1, \; \{t \nu\}>0 \right\}\,.
\label{SM}
\ee
We will call this {\em Strategy 4}. Unfortunately,
the complexity of this problem is unknown. We conjecture that it is $NP$-hard.

Let us now construct the augmented vector
\[
{\ve \nu} = ( \lambda_1, \ldots, \lambda_m, \nu )^T\,
\]
and put for ${\ve x}=(x_1,\ldots, x_{d-1}, x_d)$
\bea
N({\ve x})=||(x_1,\ldots, x_{d-1}, 1-x_d)||\,.
\eea

The additive strategy attempts to find a vector ${\ve \xi}=t {\ve \nu} \mod 1$ with minimum value
$N({\ve x})$ and positive last entry $\xi_d=\{t \nu\}$, which represents the rounding effect of the
iterated cut. That is, we are solving the following optimization problem:
\be
\min \left\{ N(t{\ve \nu}\mod 1): \;
t=1,\ldots,q-1, \; \{t \nu\}>0 \right\} .
\label{SCP}
\ee
We call this {\em Strategy 5}. We conjecture that this problem too is $NP$-hard. In Section
\ref{se:algorithm}, we show that both problems (\ref{SM}) and (\ref{SCP}) can be solved approximately
in polynomial time.

Note that the new Strategies 4 and 5 (as well as the Strategies 0--3) do not depend on the objective function. Finding an effective strategy that employs the parameters of the objective function is a
topic for future research.

\begin{table}[]
\caption{Computational results}
\centering
\small
\begin{tabular}{ ccrrrrrr }
\hline \noalign{\smallskip}
$m$ & $n$ & $A_0(m,n)$ & $A_1(m,n)$ & $A_2(m,n)$ & $A_3(m,n)$ & $A_4(m,n)$ & $A_5(m,n)$ \\ 
\noalign{\smallskip} \hline \noalign{\smallskip}
   & 10 & 19.29 & 35.46  & 30.98 & 34.99 & 45.94 & 45.02 \\

5  & 20 & 18.89 & 29.14 & 23.56 & 33.67 & 36.00 & 40.84 \\

   & 30 & 14.09 & 21.76 & 17.23 & 19.99 & 21.20 & 29.74 \\[1.2ex]

   & 10 & 4.52 & 5.95 & 4.66 & 6.68 & 13.22 & 10.46 \\

10 & 20 & 3.14 & 5.90 & 4.97 & 7.13 & 11.37 & 8.90 \\

   & 30 & 3.85 & 6.63 & 4.93 & 6.28 & 10.49 & 9.44 \\[1.2ex]

   & 10 & 5.89 & 9.02 & 7.53 & 10.02 & 15.92 & 16.39 \\

15 & 20 & 1.86  & 2.94 & 2.51 & 3.68 & 14.16 & 12.15 \\

   & 30 & 2.87  & 4.04 & 3.16 & 3.28 & 10.25 & 10.00 \\[1.2ex]
\noalign{\smallskip} \hline \noalign{\smallskip}
\multicolumn{2}{c}{$A_s:$} & 8.27 & 13.43 & 11.06 & 13.97 & 19.83 & 20.33\\
\noalign{\smallskip} \hline
\end{tabular}
\label{table:gaps}
\end{table}

\subsection{Preliminary computational results}

In order to gain some insight into the performance of the six strategies mentioned in the
previous subsection, we performed some computational experiments on some small ILPs. We began
by creating 45 random ILPs of the form
\[
\max \left\{ {\ve c}^T{\ve x}: \: A{\ve x} \le {\ve b}, \:  {\ve x} \in \Z_+^n \right\},
\]
where ${\ve c} \in \Z_+^n$, $A \in \Z_+^{m \times n}$ and ${\ve b} \in \Z_+^m$. (Note
that instances of this form are guaranteed to be feasible, since the origin
is feasible.) For any pair $(n,m)$ with
$m \in \{5, 10, 15\}$ and $n \in \{10, 20, 30\}$, 5 such instances $(m,n,k)$, $k\in \{1,\ldots,5\}$
were constructed. The $c_i$ were random integers distributed uniformly between 1 and 5.
The $A_{ij}$ were random integers with a 50\% chance of being distributed uniformly between 1 and 5,
but a 50\% chance of being zero. This was to mimic the sparsity that is usually found in real-life
ILPs. (If any column of $A$ had fewer than two non-zeroes, the column was discarded and another one
generated. This is to ensure boundedness.) The $b_j$ were set to
$\left\lceil \frac12 \sum_{i=1}^n A_{ij} \right\rceil$.

For each instance $(m,n,k)$, the LP relaxation was solved to optimality and the optimal simplex
tableau computed using exact rational arithmetic. (To avoid numerical problems, instances for
which the determinant $D$ of the basis matrix exceeded $2\cdot 10^6$ were discarded. The desire to
keep $D$ small also motivated the above restrictions on the coefficients.) Then, for each variable
taking a fractional value in the LP solution, whether a structural variable or a slack variable,
a GF-cut was generated and converted into a CG-cut. At the end, for the instance $(m,n,k)$ and for
each of the strategies $s \in \{0,\ldots,5\}$, we stored the average $A_{s}(m,n,k)$, over all
considered CG-cuts, of the percentage of the integrality gap closed by a CG-cut.

In Table \ref{table:gaps} below, we compare all six strategies. For each value of $(n,m)$ and for
each of the strategies $s \in \{0,\ldots,5\}$, we report the average
$A_s(m,n)=(1/5)\sum_{k=1}^{5}A_s(m,n,k)$. In the last row of the table, the numbers
$A_s=(1/9)\sum_{m,n} A_s(m,n)$ are the averages of $A_s(m,n,k)$ over all computed instances. 

The computational results show that both Strategies 4 and 5 close significantly more
of the integrality gap than the other four strategies. This indicates that the rounding
effect and the norm of the multiplier vector should be simultaneously optimized for
generating strong CG-cuts. To gain an insight on the theoretical aspects of this problem,
we study in the next section the behavior of the iterated cuts for a randomly chosen
augmented vector ${\ve \nu}$.

\section{Behaviour of the Iterates for a Random Vector}
\label{se:random}

As illustrated by Figure \ref{fig:distributions}, the values of the minima in
(\ref{SM}) and (\ref{SCP}) may vary significantly from one vector ${\ve \nu}$ to another, even
for a fixed $q$. Intuitively, the chance of obtaining a good iterate is higher
if the iterates are `spread' reasonably uniformly over the hypercube, as in
cases C and D. This led us to examine the behaviour of the iterates for `typical'
vectors ${\ve \nu}$.

\begin{figure}[]
        \begin{subfigure}[b]{0.4\textwidth}
                \centering
                \includegraphics[width=\textwidth]{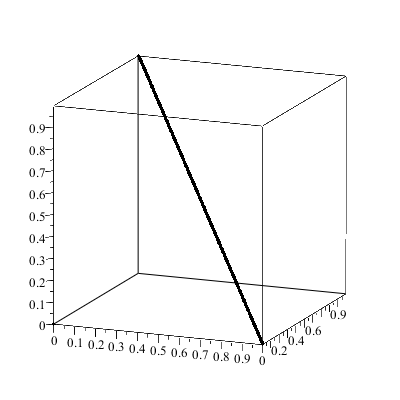}
                \caption{}
                \label{fig:distr1}
        \end{subfigure}%
        ~ 
        \begin{subfigure}[b]{0.4\textwidth}
                \centering
                \includegraphics[width=\textwidth]{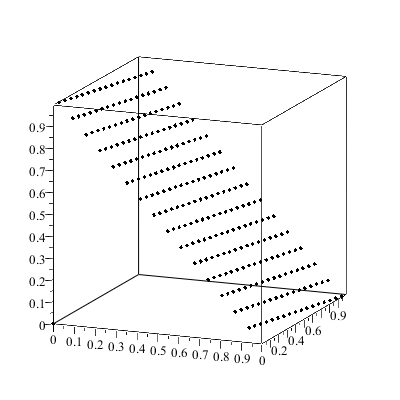}
                \caption{}
                \label{fig:distr2}
        \end{subfigure}%

        \begin{subfigure}[b]{0.4\textwidth}
                \centering
                \includegraphics[width=\textwidth]{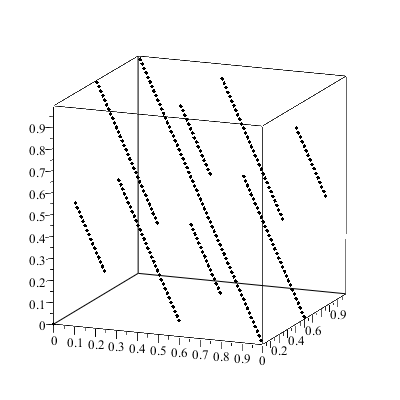}
                \caption{}
                \label{fig:distr3}
        \end{subfigure}
        ~ 
        \begin{subfigure}[b]{0.4\textwidth}
                \centering
                \includegraphics[width=\textwidth]{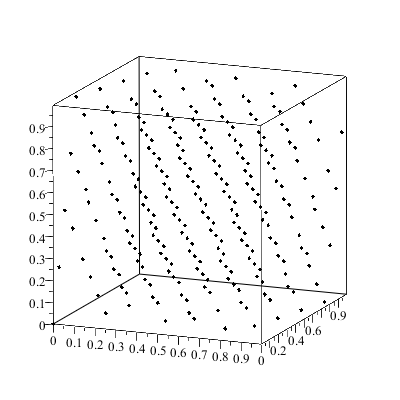}
                \caption{}
                \label{fig:distr4}
        \end{subfigure}%
        \caption{\scriptsize Examples of sequences ${\ve \nu}\mod 1$ with $q=256$: (A) ${\ve \nu}=(1/256,255/256,255/256)$; (B) ${\ve \nu}=(1/256,15/256,255/256)$; (C) ${\ve \nu}=(1/256,63/256,127/256)$ and (D) ${\ve \nu}=(1/256,15/256,63/256)$.}\label{fig:distributions}
\end{figure}

To formulate the obtained results, we need to introduce the following notation.
Given a matrix $B \in \R^{d \times l}$ with linearly independent column vectors
${\ve b}_1, \ldots, {\ve b}_l \in \R^d$, the set
\bea
L({\ve b}_1, \ldots, {\ve b}_l)=\{u_1{\ve b}_1+ \cdots+ u_l{\ve b}_l: u_1, \ldots, u_l\in \Z\}\,
\eea
is called a {\em lattice}\/ of {\em rank} (or {\em dimension})\/ $l$ with {\em basis}\/ ${\ve b}_1, \ldots, {\ve b}_l$
and {\em determinant}
\bea
\det(L({\ve b}_1, \ldots, {\ve b}_l))=\sqrt{\det(B^T B)}\,.
\eea
For a comprehensive and extensive
survey on lattices and Minkowski's geometry of numbers we refer the reader to the book of Gruber and Lekkerkerker \cite{{GrLek}}.

Given lattice $L$, we will denote by $L^*$ its {\em dual} lattice, that is
\bea
L^*=\{{\ve y}\in \spn_{\R}(L): {\ve y}^T{\ve x}\in \Z\,\;\mbox{for all}\; {\ve x}\in L\}\,.
\eea

Let $B^d({\ve x}, r)$ denote a $d$-dimensional ball of radius $r$ centered at ${\ve x}$. Given any $l$-dimensional lattice $\Lambda\subset \R^d$ we also denote by $\lambda_i=\lambda_i(\Lambda)$
its $i$th successive minimum
\bea
\lambda_i=\min\{r>0: \dim \spn_{\R}(B^d({\ve 0}, r)\cap \Lambda)\ge i\}\,,\;1\le i\le l\,.
\eea

Recall that the {\em inhomogeneous minimum}\/ of a set $S \subset \spn_\R(L)$ with respect
to a lattice $L$ is defined as
\bea \mu(S,L)= \inf\{\sigma>0: L + \sigma S=\spn_\R(L)\}\,. \eea
The {\em covering radius}\/ $\tau(L)$ of a lattice $L$ is the inhomogeneous minimum
of the unit ball $B$ in $\spn_\R(L)$ with respect to $L$,
\bea
\tau(L)=\mu(B,L)\,.
\eea

Let also $\ll_d$ (resp. $\gg_d$) denote the Vinogradov symbol with the constant
depending on $d$ only. The notation $\gg\ll_d$ is interpreted as both $\ll_d$ and $\gg_d$ hold.

We will first study the `typical' behaviour of the iterates, for a random vector ${\ve \nu}$
sampled from a certain natural distribution. In particular, we show that the covering radius
of a lattice associated with ${\ve \nu}$ is relatively small on average. This is important
because the quality of the approximation algorithm presented in Section \ref{se:algorithm}
will be defined in terms of the covering radius. (Of course, a multiplier vector obtained in
a real cutting-plane algorithm will not be truly random. Nevertheless, the insights gained in
this section will be useful for what follows.)

In more detail, we study in this section the behavior of the points $t {\ve \nu} \mod 1$ for a
random vector ${\ve \nu}$ uniformly chosen from the set of rational vectors of the form
\be \begin{split} {\ve \nu}=\left(\frac{p_1}{q}, \frac{p_2}{q}, \ldots, \frac{p_d}{q}\right)^T\,,\; p_1, p_2,\ldots, p_d,q\in \Z_{>0}\,,\\ \max_{1\le i\le d} p_i<q\,,\;\gcd(p_1, p_2,\ldots, p_d,q)=1 \end{split}\label{nu_via_p} \ee
that have denominator $q \le T$, for some $T \ge 1$. Our aim is to understand how well the
points $t {\ve \nu}\mod 1$ are distributed `on average'.

The iterates $t{\ve \nu} \mod 1$ can be naturally embedded in the lattice
\be
L_{\ve \nu}=\{{\ve z}+(t{\ve \nu} \mod 1): {\ve z}\in \Z^d\,,\; t=1,\ldots,q-1 \}\,.
\label{z+lambda}
\ee
Equivalently, $L_{\ve \nu} = \Z^d + \Z {\ve \nu}$.
This observation allows us to use results from Minkowski's geometry of numbers
and, via the transference principle (see, e.g., \cite{Banaszczyk}), Schmidt's
theorems \cite{Schmidt} on the distribution of integer sublattices.

The first result of this paper aims to understand the `typical' behavior of the
covering radius $\tau(L_{\ve \nu})$ for ${\ve \nu}$ of the form (\ref{nu_via_p}) with common denominator $q \le T$.
Note that for any dimension $d$ and any common denominator $q>1$ there exist vectors $\nu$ such that the covering radius $\tau(L_{\ve \nu})$ is relatively large.  For instance, it is easy to see that $\tau(L_{(1/q, \ldots, 1/q)})>1/4$ for any integer $q>1$. In what follows, we will show that for a `typical' vector $\nu$ the covering radius $\tau(L_{\ve \nu})$ has the order $q^{-1/d}$.

For technical reasons it is convenient to replace the rationals with bounded
denominators by the primitive integer vectors in a bounded domain.
Let $\widehat\N^{d+1}$ be the set of integer vectors in $\R^{d+1}$ with positive
co-prime coefficients, and let
\[
{\mathcal D}_{d+1} = \left\{ (x_1, \ldots, x_{d}, x_{d+1}) \in \R^{d+1}_{\ge 0}:
\max_{j=1,\ldots,d} x_j < x_{d+1} \le 1 \right\}.
\]
Then for $T \ge 1$,
the elements ${\ve a}=(p_1,\ldots,p_d,q)$ of the set
$ \widehat\N^{d+1}\cap T {\mathcal D}_{d+1}$ will correspond to the rational vectors
${\ve \nu}$ of the form (\ref{nu_via_p}) and the common denominator $q\le T$.
Since $ L_{\ve \nu}$ is uniquely defined by the integer vector ${\ve a}=(p_1,\ldots,p_d,q)$,
we will also denote the lattice $L_{\ve \nu}$ by $L_{\ve a}$.

For any $T \in \R_{\ge 1}$ and $R \in \R_{>0}$, we define the quantity
\bea
P_{d}(T,R)=\frac{1}{\#(\widehat\N^{d+1}\cap T{\mathcal D}_{d+1})}\#\left\{{\ve a}\in\widehat\N^{d+1}\cap T{\mathcal D}_{d+1}:
\tau(L_{\ve a})a_{d+1}^{1/d}>R \right\}\,.
\eea
Roughly speaking, $P_{d}(T,R)$ is the probability of uniformly picking up a
rational vector ${\ve \nu}$ of the form (\ref{nu_via_p}) with denominator $q \le T$,
such that the iterations $t{\ve \nu} \mod 1$ are relatively badly distributed in
$[0,1)^d$ or, more precisely, such that the covering radius of the lattice $L_{\ve \nu}$ is
bigger than $R q^{-1/d}$.
\begin{theo}
Let $d\ge 2$. Then
\be
P_{d}(T,R)
\ll_{d} R^{-d}\,,
\label{bound_for_P}
\ee
uniformly over all $T\ge 1$  and all $R>0$. Furthermore,
\be
P_d(T,R)=0\,\;\mbox{whenever}\; R>\frac{\sqrt{d}}{2}T^{1/d}\,.
\label{zero}
\ee
\label{probability}
\end{theo}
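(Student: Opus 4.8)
The plan is to prove the two assertions of Theorem~\ref{probability} separately, treating \eqref{zero} as the easy warm-up and \eqref{bound_for_P} as the main work. For \eqref{zero}, I would observe that the lattice $L_{\ve a}=\Z^d+\Z{\ve \nu}$ always contains $\Z^d$, whose covering radius is $\sqrt d/2$. Since $L_{\ve a}\supseteq\Z^d$ implies $\tau(L_{\ve a})\le\tau(\Z^d)=\sqrt d/2$, we get $\tau(L_{\ve a})a_{d+1}^{1/d}\le(\sqrt d/2)\,q^{1/d}\le(\sqrt d/2)T^{1/d}$ for every ${\ve a}\in\widehat\N^{d+1}\cap T{\mathcal D}_{d+1}$. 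Hence no ${\ve a}$ contributes to the counting set once $R>(\sqrt d/2)T^{1/d}$, and $P_d(T,R)=0$, which is exactly \eqref{zero}.

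For \eqref{bound_for_P}, the heart of the matter is a transference step followed by a lattice-point count. First I would pass from the covering radius of $L_{\ve \nu}$ to the first successive minimum of its dual. The dual of $\Z^d+\Z{\ve \nu}$ is $\{{\ve y}\in\Z^d:{\ve y}^T{\ve \nu}\in\Z\}$, a sublattice of $\Z^d$ of index $q$ (using $\gcd(p_1,\dots,p_d,q)=1$), so $\det(L_{\ve \nu}^*)=q$ and $\det(L_{\ve \nu})=1/q$. By the transference inequalities of Banaszczyk (cited in the excerpt), $\tau(L_{\ve \nu})\,\lambda_1(L_{\ve \nu}^*)\ll_d 1$; therefore $\tau(L_{\ve \nu})a_{d+1}^{1/d}=\tau(L_{\ve \nu})q^{1/d}>R$ forces $\lambda_1(L_{\ve \nu}^*)\ll_d q^{1/d}/R$. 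So the left-hand side of \eqref{bound_for_P} is bounded by the proportion of primitive ${\ve a}=(p_1,\dots,p_d,q)$ in $T{\mathcal D}_{d+1}$ for which the sublattice $\{{\ve y}\in\Z^d:p_1y_1+\dots+p_dy_d\equiv0\ (\mathrm{mod}\ q)\}$ contains a nonzero vector of length $\ll_d q^{1/d}/R$.

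Next I would estimate that proportion by a union bound over candidate short vectors. Fix a nonzero ${\ve y}\in\Z^d$ with $\|{\ve y}\|\ll_d q^{1/d}/R$; the condition ${\ve y}^T{\ve p}\equiv0\ (\mathrm{mod}\ q)$ cuts the number of admissible residue classes for ${\ve p}$ modulo $q$ by a factor of roughly $\gcd(y_1,\dots,y_d,q)/q$, and summing $1/q$ over the $\ll_d (q^{1/d}/R)^d=q/R^d$ lattice points ${\ve y}$ in that ball (the primitivity and gcd-with-$q$ contributions only help) yields a bound of order $R^{-d}$ for the conditional probability at each fixed $q$; this is where Schmidt's counting theorems \cite{Schmidt} on integer sublattices enter to make the averaging over $q\le T$ and the normalization by $\#(\widehat\N^{d+1}\cap T{\mathcal D}_{d+1})$ uniform in $T$. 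Averaging the per-$q$ bound $\ll_d R^{-d}$ over $q$ then gives \eqref{bound_for_P}.

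The main obstacle I expect is making the union-bound/averaging argument genuinely uniform in both $T$ and $R$, and in particular handling the regime of small $R$ (where $q^{1/d}/R$ is large and the ball contains many lattice points) together with the coprimality constraint defining $\widehat\N^{d+1}$. The constraint $\gcd(p_1,\dots,p_d,q)=1$ and the sharp cutoff $\max_j p_j<q$ interact with the residue-counting, so I would need either a clean Möbius-type sieving of the coprimality condition or a direct appeal to Schmidt's asymptotics for the number of sublattices of $\Z^{d+1}$ of bounded determinant to control $\#(\widehat\N^{d+1}\cap T{\mathcal D}_{d+1})$ from below and the bad set from above simultaneously. Once the counting is set up correctly, the transference step and the geometry are routine; the delicate part is purely the uniform estimate of these lattice-point counts.
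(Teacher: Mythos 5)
Your argument for \eqref{zero} coincides with the paper's: $\Z^d\subseteq L_{\ve a}$ gives $\tau(L_{\ve a})\le\sqrt d/2$, and $a_{d+1}\le T$ finishes it. For \eqref{bound_for_P} you and the paper agree on the transference step (pass to $\lambda_1$ of the dual), but then you diverge on the counting. You propose a direct union bound over short vectors in the dual sublattice $L_{\ve\nu}^*\subseteq\Z^d$: fix ${\ve y}$ with $\|{\ve y}\|\ll q^{1/d}/R$, the proportion of ${\ve p}$ mod $q$ with ${\ve y}^T{\ve p}\equiv 0$ is $\gcd(y_1,\ldots,y_d,q)/q$, and sum. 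The paper instead embeds the problem into $\Z^{d+1}$ (via the kernel lattice $\Lambda_{\ve a}=\{{\ve x}\in\Z^{d+1}:{\ve x}^T{\ve a}=0\}$, with $\pi_{d+1}(\Lambda_{\ve a})=L_{\ve a}^*$), reduces to counting $d$-dimensional sublattices of $\Z^{d+1}$ of bounded determinant whose successive-minima ratios $\rho_j=\lambda_{j+1}/\lambda_j$ are constrained, and invokes Schmidt's asymptotic distribution theorem for sublattices plus a dyadic decomposition of the $\rho_j$ in the style of Str\"ombergsson. Your route is more elementary and self-contained; the paper's is the machinery that generalizes to harder functionals (e.g.\ last minima and Frobenius numbers). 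Two caveats on your version: your claim that the ``gcd-with-$q$ contributions only help'' is backwards — the gcd weight produces a divisor sum $\sum_{g\mid q}g^{1-d}$ in the per-$q$ bound, which is $O_d(1)$ only for $d\ge 3$ and needs to be averaged over $q\le T$ (weighted by $q^d$) to absorb for $d=2$ — and your appeal to Schmidt's theorems is misplaced: if you do the union bound directly you do not need them at all (the lower bound $\#(\widehat\N^{d+1}\cap T\mathcal D_{d+1})\gg_d T^{d+1}$ is an elementary primitive-vector count), whereas the paper needs Schmidt's theorem precisely because it avoids the union bound.
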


A celebrated result of Kannan \cite{Kannan} implies that the {\em Frobenius number}  associated with an integer vector ${\ve a}\in \widehat\N^{d+1}$
can be estimated in terms of the covering radius of the dual lattice $L_{\ve a}^*$. (For more details  we refer the reader to the book of Ramirez Alfonsin \cite{Alf}.)
The following proof of Theorem \ref{probability} is based on  a recent far-reaching refinement
due to Str\"ombergsson \cite{Str} of the approach used in \cite{AlievHenk} and
\cite{AHH} for estimating the expected value of Frobenius numbers, combined with the Banaszczyk
transference theorem \cite{Banaszczyk}. The approach is
built on results from the Minkowski's geometry of numbers (see e. g. \cite{peterbible},
\cite{GrLek}) and results on the distribution of integer lattices obtained by Schmidt
in \cite{Schmidt}.

\subsection*{Proof of Theorem \ref{probability}}

Observe that $\Z^d$ is a sublattice of $L_{\ve a}$ and hence
\be
\tau(L_{\ve a})\le \tau(\Z^d)=\frac{\sqrt{d}}{2}\,.
\label{just_a_sublattice}
\ee
Note also that for all ${\ve a}\in \widehat\N^{d+1}\cap T{\mathcal D}_{d+1}$ we have $a_{d+1}\le T$. Hence the inequality (\ref{just_a_sublattice}) implies (\ref{zero}).

Let us now prove that the inequality (\ref{bound_for_P}) holds. For a subset $Y\subset  \R^{d+1}$ we denote by $\pi_{d+1}(Y)$ the orthogonal projection of $Y$ onto the coordinate hyperplane $x_{d+1}=0$; we view $\pi_{d+1}(Y)$ as a subset of $\R^d$.
Given ${\ve a}\in \widehat\N^{d+1}$, we define the lattice
\bea
\Lambda_{\ve a}=
\{{\ve x}\in \Z^{d+1}: {\ve x}^T{\ve a}=0\}\,
\eea
and set $M_{\ve a}=\pi_{d+1}(\Lambda_{\ve a})$. Then $M_{\ve a}$ is a sublattice of $\Z^d$ of determinant $\det(M_{\ve a})=a_{d+1}$ (see e. g. \cite{AG}, Section 2)
%
It is well-known that $M_{\ve a}=L_{\ve a}^*$ (see e.g. \cite{AG}).

%

By Banaszczyk transference theorem \cite{Banaszczyk}, we have
\bea
 \lambda_1(M_{\ve a})\le \frac{d}{2\tau(L_{\ve a})}\,.
\eea
Since $M_{\ve a}$ embedded in $\R^{d+1}$ is the orthogonal projection of $\Lambda_{\ve a}$ on the coordinate hyperplane $x_{d+1}=0$ and $a_{d+1}=\max_i a_i$, we have $\lambda_1(\Lambda_{\ve a})\le \sqrt{d+1}\lambda_1(M_{\ve a})$ and, consequently,
\be
 \lambda_1(\Lambda_{\ve a})\le \frac{d\sqrt{d+1}}{2\tau(L_{\ve a})}\,.
\label{lambda_tau}
\ee

In the rest of this subsection we modify the proof of Theorem 3 in \cite{Str} for
our case. Roughly speaking, the main difference is that, due to the transference
principle reflected in the inequality (\ref{lambda_tau}), we need to work with the
first successive minimum $\lambda_1(\Lambda_{\ve a})$, whilst in the case of the
Frobenius number the last successive minimum $\lambda_{d}(\Lambda_{\ve a})$ plays
the major role.

Note first that $\#(\widehat\N^{d+1}\cap T{\mathcal D}_{d+1})\gg \ll_d T^{d+1}$ uniformly over all $T\ge 1$ and that $\det(\Lambda_{\ve a})=||{\ve a}||\ge  a_{d+1}$. Therefore
\be
\begin{split}
P_d(T,R)\ll_d   T^{-(d+1)} \times \\ \times\#\left\{\Lambda\in {\mathcal L}_d:  \det(\Lambda)\le \sqrt{d+1} T,  \lambda_1(\Lambda)<   \frac{d\sqrt{d+1}\det(\Lambda)^{1/d}}{2R}              \right\}\,,
\end{split}
\label{right_hand_side}
\ee
where ${\mathcal L}_d$ is the set of all $d$-dimensional sublattices of $\Z^{d+1}$.

Let
\bea
\rho_j(\Lambda)= \lambda_{j+1}(\Lambda)/\lambda_j(\Lambda)\,,\;\; j=1,\ldots, d-1\,.
\eea
For any ${\ve r}=(r_1, \ldots, r_{d-1})\in \R^{d-1}_{\ge 1}$ we set
\bea
{\mathcal L}_d({\ve r})=\{\Lambda\in {\mathcal L}_d: \rho_j(\Lambda)\ge r_j\,,\;1\le j\le d-1\}\,.
\eea
Let also $X_d$ be the set of all lattices $L\subset \R^d$ of determinant one and $\mu_d$ be Siegel's measure (see \cite{Siegel}) on $X_d$, normalized to be a probability measure.
The main ingredient of the proof is the following result.

\begin{theo}[Schmidt \cite{Schmidt}]
For any ${\ve r}\in \R^{d-1}_{\ge 1}$ and $T>0$ we have
\be
\begin{split}
\#\{\Lambda\in {\mathcal L}_d({\ve r}): \det(\Lambda)\le T\}=\frac{\pi^{\frac{d+1}{2}}}{2\Gamma\left(1+\frac{d+1}{2}\right)}
\left(\prod_{j=2}^d\zeta(j)\right)\times \\
\times\mu_d\left(\{L\in X_d: \rho_j(L) \ge r_j\,,\; 1\le j\le d-1 \}\right)T^{d+1}\\
+O_{d}\left(\left(\prod_{j=1}^{d-1}r_j^{-(j-\frac{1}{d})(d-j)}\right)T^{d+1-\frac{1}{d}}\right)\,.
\end{split}
\label{Schmidt_asympt}
\ee
Furthermore,
\be
\mu_d(\{L\in X_d: \rho_j(L)\ge r_j\,,\;1\le j\le d-1\})\gg \ll_d\prod_{j=1}^{d-1}r_j^{-j(d-j)}\,.
\label{Schmidt_asympt_main}
\ee

\end{theo}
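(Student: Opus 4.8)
This is Schmidt's counting theorem; I sketch how I would reprove it. The plan is to record, for each $\Lambda\in{\mathcal L}_d$, the two invariants it carries: its \emph{primitive hull} and its \emph{index} in that hull. For ${\ve a}\in\Z^{d+1}$ primitive put $\Lambda_{\ve a}=\{{\ve x}\in\Z^{d+1}:{\ve x}^T{\ve a}=0\}$; as in the body of the paper $\det(\Lambda_{\ve a})=\|{\ve a}\|$, and ${\ve a}$ is determined by $\spn_\R\Lambda$ up to sign. Every $\Lambda\in{\mathcal L}_d$ is an index-$m$ sublattice of exactly one $\Lambda_{\ve a}$, with $\det(\Lambda)=m\|{\ve a}\|$. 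Fixing once and for all an isometry $\spn_\R\Lambda_{\ve a}\cong\R^d$, the normalised shape $\overline\Lambda=\det(\Lambda)^{-1/d}\Lambda$ is a well-defined point of $X_d$, and the ratios $\rho_j(\Lambda)$ depend only on $\overline\Lambda$. Hence, with $S=\{L\in X_d:\rho_j(L)\ge r_j,\ 1\le j\le d-1\}$,
\[
\#\{\Lambda\in{\mathcal L}_d({\ve r}):\det(\Lambda)\le T\}=\tfrac12\sum_{{\ve a}\ \mathrm{primitive}}\ \sum_{1\le m\le T/\|{\ve a}\|}N_{\ve a}(m;S),\quad N_{\ve a}(m;S):=\#\{\Lambda\subseteq\Lambda_{\ve a}:[\Lambda_{\ve a}:\Lambda]=m,\ \overline\Lambda\in S\}.
\]

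The analytic heart is a quantitative equidistribution statement for the shapes of bounded-index sublattices: $N_{\ve a}(m;S)=\mu_d(S)\,f_d(m)+\mathrm{(error)}$, where $f_d(m)$ is the number of index-$m$ sublattices of $\Z^d$, and the error is uniform in ${\ve a}$, $m$, $S$ with a genuine power saving in $m$. I would establish this following Schmidt: parametrise index-$m$ sublattices of $\Lambda_{\ve a}$ by their Minkowski-reduced bases, whose Gram data fill out a Siegel-type region; the number of reduced bases of given determinant with shape in $S$ is then a lattice-point count in that region, governed to main order by its Haar volume (whence $\mu_d(S)$ via Siegel's measure), the discrepancy being controlled by boundary and truncation estimates. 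It is exactly this step that produces the saving $T^{d+1-1/d}$ in \eqref{Schmidt_asympt} and the weighted product $\prod_j r_j^{-(j-1/d)(d-j)}$ measuring how the region thins out as $\lambda_{j+1}/\lambda_j$ is forced large; this bookkeeping, essentially Theorem~3 of \cite{Str} adapted to constraints on the $\rho_j$ via the transference in \eqref{lambda_tau}, is the main obstacle. (Alternatively one can view the whole sum as an Eisenstein series on $\mathrm{SL}_{d+1}$ paired against the indicator of $S$ and read the main term off its simple pole, but the effective error still needs the reduction-theoretic input.)

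It remains to sum. Using the identity $\sum_m f_d(m)m^{-s}=\zeta(s)\zeta(s-1)\cdots\zeta(s-d+1)$ and $\sum_{{\ve a}\ \mathrm{primitive}}\|{\ve a}\|^{-s}=E_{d+1}(s)/\zeta(s)$, where $E_{d+1}(s)=\sum_{{\ve a}\ne{\ve 0}}\|{\ve a}\|^{-s}$ is the Epstein zeta function of $\Z^{d+1}$, the main part of $\sum_\Lambda(\det\Lambda)^{-s}$ is $\tfrac12\mu_d(S)\,E_{d+1}(s)\prod_{j=1}^{d-1}\zeta(s-j)$, the cancellation of $\zeta(s)$ explaining why it does not appear in \eqref{Schmidt_asympt}. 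Since $E_{d+1}(s)$ has a simple pole at $s=d+1$ with residue $(d+1)V_{d+1}$, $V_{d+1}=\pi^{(d+1)/2}/\Gamma(1+(d+1)/2)$ the volume of the unit ball, and since $\prod_{j=1}^{d-1}\zeta(s-j)$ is holomorphic and nonzero there, $s=d+1$ is the rightmost pole of the product. A Tauberian/contour argument (carrying along the error of the previous paragraph) then gives
\[
\#\{\Lambda\in{\mathcal L}_d({\ve r}):\det(\Lambda)\le T\}=\frac{V_{d+1}}{2}\Big(\prod_{j=2}^d\zeta(j)\Big)\mu_d(S)\,T^{d+1}+O_d\Big(\textstyle\big(\prod_{j=1}^{d-1}r_j^{-(j-1/d)(d-j)}\big)T^{d+1-1/d}\Big),
\]
which is \eqref{Schmidt_asympt} after unwinding $V_{d+1}$. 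The elementary version of this last step (avoiding contours) is to substitute the equidistribution estimate into the double sum and perform the $m$- then ${\ve a}$-summation directly, being careful that $\sum_{m\le N}f_d(m)$ may only be replaced by its asymptotic when $N$ is large, so that the dominant contribution genuinely comes from ${\ve a}$ of norm comparable to $T$.

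Finally, \eqref{Schmidt_asympt_main} is an intrinsic statement on $(X_d,\mu_d)$ which I would treat separately. In reduction-theory (Iwasawa/Siegel-domain) coordinates on $X_d=\mathrm{SL}_d(\R)/\mathrm{SL}_d(\Z)$, the constraint $\rho_j(L)\ge r_j$ forces a gap $\gg r_j$ in the diagonal torus part between the $j$-th and $(j{+}1)$-st scales, i.e.\ $L$ lies within bounded distortion of an orthogonal direct sum of a rank-$j$ and a rank-$(d-j)$ lattice modulo the $j\times(d-j)$ unipotent block. Integrating the Haar density over this region, the unipotent block contributes $O_d(1)$ after reduction while the split scaling contributes $\asymp r_j^{-j(d-j)}$, the exponent $j(d-j)$ being the number of positive roots crossing the parabolic of type $(j,d-j)$. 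Since the $d-1$ constraints concern disjoint consecutive scales and are therefore essentially independent, multiplying the contributions yields $\mu_d(\{L:\rho_j(L)\ge r_j,\ 1\le j\le d-1\})\gg\ll_d\prod_{j=1}^{d-1}r_j^{-j(d-j)}$, which is \eqref{Schmidt_asympt_main}.
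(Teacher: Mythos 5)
The paper does not prove this theorem; it is cited verbatim from Schmidt \cite{Schmidt} and used as a black box, so there is no internal argument for your proposal to be compared against. Judged as a reconstruction of Schmidt's proof, your skeleton is sound: decomposing each $\Lambda\in\mathcal L_d$ into its primitive hull $\Lambda_{\ve a}$ and index $m$, passing to Dirichlet series via $\sum_m f_d(m)m^{-s}=\prod_{j=0}^{d-1}\zeta(s-j)$ and the primitive Epstein factor $E_{d+1}(s)/\zeta(s)$, and reading the main term off the simple pole of $E_{d+1}$ at $s=d+1$ with residue $(d+1)V_{d+1}$ does reproduce exactly $\tfrac{\pi^{(d+1)/2}}{2\Gamma(1+(d+1)/2)}\prod_{j=2}^d\zeta(j)\,\mu_d(S)\,T^{d+1}$; and for \eqref{Schmidt_asympt_main}, reading $r_j^{-j(d-j)}$ from the Haar density across the $(j,d-j)$-parabolic is the right heuristic, the exponent being the number of positive roots in its unipotent radical (for $d=2$ this recovers the familiar $\asymp r^{-1}$ mass of the cusp of the modular surface).

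However, the step you yourself flag as the ``main obstacle'' --- the uniform-in-${\ve a}$ equidistribution $N_{\ve a}(m;S)=\mu_d(S)f_d(m)+\text{error}$ with an error that, after executing the $m$- and ${\ve a}$-summations, yields the precise saving $T^{d+1-1/d}$ and the weighted product $\prod_{j=1}^{d-1}r_j^{-(j-1/d)(d-j)}$ --- is asserted, not established, and it \emph{is} the substance of Schmidt's theorem; invoking it is circular. Two further points you would need to pin down: the normalised shape $\overline\Lambda$ is only well-defined modulo $O(d)$, so one must observe that $S$ is $O(d)$-invariant (it is, being cut out by ratios of successive minima); and the equidistribution error for index-$m$ sublattices of $\Lambda_{\ve a}$ depends a priori on the \emph{shape} of $\Lambda_{\ve a}$, not merely on $\|{\ve a}\|$, so the uniformity in ${\ve a}$ that your double sum relies on is itself a nontrivial claim requiring justification before the term-by-term summation is legitimate. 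As written, the proposal is a faithful outline of Schmidt's strategy rather than a proof.
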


From the above theorem we get the upper bound

\be
\begin{split}
\#\left\{\Lambda\in{\mathcal L}_d({\ve r}): \det(\Lambda)\le T\right\}\ll_dT^{d+1}\times\\
\times\prod_{j=1}^{d-1}r_j^{-j(d-j)}\left(1+T^{-\frac{1}{d}}\prod_{j=1}^{d-1}r_j^{\frac{1}{d}(d-j)}\right)\,.
\end{split}
\label{upper_bound_Schmidt}
\ee

By Minkowski's Second theorem, for any $d$-dimensional lattice $\Lambda$ we have
\be
\lambda_1(\Lambda)^d=\frac{\prod_{j=1}^d\lambda_j(\Lambda)}{\prod_{j=1}^{d-1} \rho_j(\Lambda)^{d-j}}\gg \ll_d \frac{\det(\Lambda)}{\prod_{j=1}^{d-1} \rho_j(\Lambda)^{d-j}}\,.
\label{lambda_via_rho}
\ee

Thus there exists a constant $c_1=c_1(d)>0$  such that for any $d$-dimensional lattice $\Lambda$ and  any $R>0$,
we have
\be
 \lambda_1(\Lambda)<   \frac{d\sqrt{d+1}\det(\Lambda)^{1/d}}{2R} \Rightarrow \prod_{j=1}^{d-1} \rho_j(\Lambda)^{d-j} >c_1 R^d\,.
\label{tau_implies_rho}
\ee
Assume without loss of generality $R>ec_1^{-\frac{1}{d}}$ (the inequality (\ref{bound_for_P}) is trivial when $R \ll 1$ as $P_{d}(T,R)\le 1$), put
\be
B=\lfloor \log(c_1R^d)-d\rfloor\in \Z_{\ge 0}\,,
\ee
and denote
\be
\begin{split}
{\mathcal R}(d, R)=\{(e^{b_1/(d-1)}, e^{b_2/(d-2)}, \ldots, e^{b_{d-2}/2}, e^{b_{d-1}}):\\ {\ve b}\in \Z^{d-1}_{\ge 0}, \sum_{j=1}^{d-1}b_j=B\}\,.
\end{split}
\ee
If $\Lambda$ is an $d$-dimensional lattice with $\prod_{j=1}^{d-1} \rho_j(\Lambda)^{d-j} >c_1 R^d$, then for $b_j=\lfloor (d-j)\log \rho_j(\Lambda)\rfloor$
we have
\be
\begin{split}
\sum_{j=1}^{d-1}b_j> \sum_{j=1}^{d-1}((d-j)\log \rho_j(\Lambda)-1)>\log(c_1R^d)-(d-1)\\
>\log(c_1R^d)-d\ge B\,.
\end{split}
\ee
Thus we can decrease some of the numbers $b_j$'s so as to make $\sum_{j=1}^{d-1}b_j=B$, while keeping ${\ve b}=(b_1, \ldots, b_{d-1})\in \Z^{d-1}_{\ge 0}$. The new vector
${\ve b}$ still satisfies $b_j\le (d-j)\log \rho_j(\Lambda)$ for each $j$, that is $\rho_j(\Lambda)\ge e^{b_j/(d-j)}$. Therefore, for any $d$-dimensional lattice $\Lambda$
with $\prod_{j=1}^{d-1} \rho_j(\Lambda)^{d-j} >c_1 R^d$, there exists some ${\ve r}\in {\mathcal R}(d, R)$ such that $r_j\le \rho_j(\Lambda)$ for all $j$.

By (\ref{tau_implies_rho}), the set in the right hand side of (\ref{right_hand_side}) is contained in the union of ${\mathcal L}_d({\ve r})$ over all ${\ve r}\in {\mathcal R}(d, R)$.
Hence we have for all $T\ge 1$ and all $R\ge ec_1^{\frac{1}{d}}$,
\be
\begin{split}
P_{d}(T,R)\ll_d T^{-(d+1)}\times \\ \times \sum_{{\ve r}\in {\mathcal R}(d, R)} \#\left\{\Lambda\in {\mathcal L}_d({\ve r}): \det(\Lambda) \le \sqrt{d+1} T\right\}\,.
\end{split}
\label{final_right_hand_side}
\ee

By (\ref{upper_bound_Schmidt}),
\be
\begin{split}
P_d(T,R)\ll_d \sum_{\scriptsize \begin{array}{c}{\ve b}\in \Z^{d-1}_{\ge 0}\\ b_1+\ldots+b_{d-1}=B\end{array} } \exp\left\{-\sum_{j=1}^{d-1}j b_j\right\}\\
+T^{-\frac{1}{d}}\sum_{\scriptsize \begin{array}{c}{\ve b}\in \Z^{d-1}_{\ge 0}\\ b_1+\ldots+b_{d-1}=B\end{array} } \exp\left\{-\sum_{j=1}^{d-1} \left(j-\frac{1}{d}\right)b_j\right\}\,.
\end{split}
\label{right_exponents}
\ee

If $d=2$ we get
\bea
P_2(T,R)\ll R^{-2}+T^{-\frac{1}{2}} R^{-1}.
\eea
If $R\le \frac{\sqrt{2}}{2}T^{1/2}$ then this implies $P_2(T,R)\ll R^{-2}$. On the other hand, if $R> \frac{\sqrt{2}}{2}T^{1/2}$ then $P_2(T,R)=0$ by (\ref{zero}).

Let us now assume $d\ge 3$. Observe that for any ${\ve b}\in \Z_{\ge 0}^{d-1}$ with $b_1+\ldots+b_{d-1}=B$ and
$b_2+\ldots+b_{d-1}=s$, we have
\bea
\sum_{j=1}^{d-1}j b_j\ge B+s
\eea
and
\bea
\sum_{j=1}^{d-1} \left(j-\frac{1}{d}\right)b_j \ge \left(1-\frac{1}{d}\right)B+s\,.
\eea
Next, for $s\in\{0,1,\ldots, B\}$ there are exactly ${{s+d-3}\choose{d-3}}$ vectors ${\ve b}\in\Z^{d-1}_{\ge 0}$
with $b_1+\ldots+b_{d-1}=B$ and $b_2+\ldots+b_{d-1}=s$. Therefore
\bea
\begin{split}
P_d(T,R)\ll_d \sum_{s=0}^B {{s+d-3}\choose{d-3}} e^{-B-s}\\+ T^{-\frac{1}{d}}\sum_{s=0}^B {{s+d-3}\choose{d-3}}
e^{-\left(1-\frac{1}{d}\right)B-s}\ll_d e^{-B}+T^{-\frac{1}{d}}e^{-\left(1-\frac{1}{d}\right)B}\\
\ll_d R^{-d}(1+ T^{-\frac{1}{d}} R)\,.
\end{split}
\eea

If $R\le \frac{\sqrt{d}}{2}T^{1/d}$ then this implies $P_d(T,R)\ll R^{-d}$. On the other hand, if $R> \frac{\sqrt{d}}{2}T^{1/d}$ then $P_d(T,R)=0$ by (\ref{zero}).
The proof is complete.

\section{The Approximation Algorithm} \label{se:algorithm}

We will assume for this section that $d \ge 2$.  Theorem \ref{probability} shows that the quantity
$1/{q}^{1/d}$ is a good predictor for the covering radius of the lattice $L_{\ve \nu}$.
Let $\coneS=[0, +\infty)^{d-1}\times (-\infty, 1)$. The following result states the existence of a
polynomial-time algorithm which computes a point of the set $L_{\ve \nu}\cap \coneS$
in a certain ball of radius bounded in terms of $\tau(L_{\ve \nu})$. The obtained bound will be used to
estimate the quality of polynomial-time approximations for the multiplicative and additive strategies
(i.e., Strategies 4 and 5) introduced in Section \ref{se:rules}.

For $r\in \R$ set
\bea
{\ve c}(r)=(r, \ldots, r, 1-r)\in \R^d\,.
\eea

\begin{theo}
There is a  polynomial time algorithm which, given a rational vector ${\ve \nu}$ of the form
(\ref{nu_via_p}) and any rational $\epsilon\in (0,1)$, finds a point ${\ve \xi}\in L_{\ve \nu}\cap \coneS$, such that
\be
{\ve \xi}\in  B({\ve c}(r), 2^{d/2}\tau(L_{\ve \nu}))\,\;\mbox{with}\;0<r\le 2^{d/2}\tau(L_{\ve \nu})+\epsilon\,.
\label{alg_bound_for_xi}
\ee
\label{length_bound}
\end{theo}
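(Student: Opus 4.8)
\textit{Proof proposal.} The plan is to reduce the problem to an \emph{approximate} closest-vector computation in the lattice $L_{\ve\nu}$, and then to exploit the product shape of $\coneS$ by turning the Euclidean guarantee of the CVP routine into a coordinatewise one. First I would make the lattice explicit and put it in good shape. Writing ${\ve\nu}=\tfrac1q(p_1,\dots,p_d)$, the integral lattice $qL_{\ve\nu}=q\Z^d+\Z(p_1,\dots,p_d)$ is generated by the $d+1$ integer vectors $q{\ve e}_1,\dots,q{\ve e}_d,(p_1,\dots,p_d)^T$, so a Hermite normal form computation yields a basis of $qL_{\ve\nu}$ and hence, after dividing by $q$, a basis $B$ of $L_{\ve\nu}$, all in time polynomial in the bit size of $(p_1,\dots,p_d,q)$. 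Running LLL on $B$ produces an LLL-reduced basis $B'$ of $L_{\ve\nu}$ of polynomially bounded bit size. Note $L_{\ve\nu}\supseteq\Z^d$, so $L_{\ve\nu}$ has full rank $d$ and $\tau(L_{\ve\nu})$ is the covering radius with respect to the $d$-dimensional unit ball.

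The engine of the algorithm is Babai's nearest-plane procedure applied, with the basis $B'$, to a target of the form ${\ve c}(r)$ for a rational $r>0$. It returns in polynomial time a point ${\ve\xi}\in L_{\ve\nu}$ with
\[
\|{\ve\xi}-{\ve c}(r)\|\le 2^{d/2}\,\mathrm{dist}({\ve c}(r),L_{\ve\nu})\le 2^{d/2}\tau(L_{\ve\nu}),
\]
the last step because the closed balls of radius $\tau(L_{\ve\nu})$ centred at the points of $L_{\ve\nu}$ cover $\R^d$; this already yields the ball condition of (\ref{alg_bound_for_xi}) for the $r$ used as target. For membership in $\coneS$ I pass to the sup-norm: $|\xi_i-c(r)_i|\le 2^{d/2}\tau(L_{\ve\nu})$ for every $i$, and since $c(r)_i=r$ for $i\le d-1$ while $c(r)_d=1-r$, whenever $r>2^{d/2}\tau(L_{\ve\nu})$ we get
\[
\xi_i\ge r-2^{d/2}\tau(L_{\ve\nu})>0\ \ (1\le i\le d-1),\qquad \xi_d\le 1-r+2^{d/2}\tau(L_{\ve\nu})<1,
\]
i.e. ${\ve\xi}\in[0,\infty)^{d-1}\times(-\infty,1)=\coneS$.

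What remains, and this is the genuinely delicate point, is to choose $r$ without knowing $\tau(L_{\ve\nu})$ and still land in the narrow window $(2^{d/2}\tau(L_{\ve\nu}),\,2^{d/2}\tau(L_{\ve\nu})+\epsilon]$. I would run the nearest-plane step on the targets ${\ve c}(\epsilon),{\ve c}(2\epsilon),{\ve c}(3\epsilon),\dots$ in turn and stop at the first $r_0=k\epsilon$ whose output lies in $\coneS$, returning that output. Since $r>2^{d/2}\tau(L_{\ve\nu})$ forces membership in $\coneS$, a failure at $\rho$ implies $\rho\le 2^{d/2}\tau(L_{\ve\nu})$; applying this to $r_0-\epsilon$ (the case $r_0=\epsilon$ being trivial) gives $0<r_0\le 2^{d/2}\tau(L_{\ve\nu})+\epsilon$, while the returned point satisfies both requirements of (\ref{alg_bound_for_xi}) with this $r_0$. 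Termination, and an $O_d(1/\epsilon)$ bound on the number of iterations, follow from $\tau(L_{\ve\nu})\le\tau(\Z^d)=\sqrt d/2$, i.e. from (\ref{just_a_sublattice}). To remove the dependence on $1/\epsilon$ for fixed $d$ one can instead compute $\tau(L_{\ve\nu})$ exactly from $B'$ (it is the square root of a rational, and in fixed dimension computable in polynomial time) together with a rational $\hat r$ in the window above, and call the nearest-plane routine once on ${\ve c}(\hat r)$.

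The main obstacle is not a single estimate but the interplay of these ingredients: the $2^{d/2}$ loss of the CVP approximation must still leave the cheap sup-norm consequence strong enough to certify ${\ve\xi}\in\coneS$; the scalar $r$ must be pinned down to within $\epsilon$ of the a priori unknown threshold $2^{d/2}\tau(L_{\ve\nu})$; and all of this must be carried out with polynomially bounded numerical data. Verifying that the HNF/LLL/Babai pipeline meets the last requirement, and that ${\ve c}(r)$ can be taken with polynomial bit size, is routine bookkeeping.
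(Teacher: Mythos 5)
Your overall strategy is the same as the paper's: compute a basis of $L_{\ve\nu}$, apply Babai's nearest-plane procedure to targets of the form ${\ve c}(r)$, and use the contrapositive ``if the Babai output is not in $\coneS$ then $r\le 2^{d/2}\tau(L_{\ve\nu})$'' to certify the bound on $r$. The basis construction, the Babai/LLL step, and the certification logic are all sound.

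The genuine gap is in the search over $r$. Your linear scan $r=\epsilon,2\epsilon,3\epsilon,\dots$ terminates only once $k\epsilon>2^{d/2}\tau(L_{\ve\nu})$, so the number of nearest-plane calls is of order $2^{d/2}\tau(L_{\ve\nu})/\epsilon$, which by (\ref{just_a_sublattice}) can be as large as $2^{d/2}\sqrt{d}/(2\epsilon)$. This is exponential in $d$ and, more importantly, in the bit length $l(\epsilon)$ of the rational $\epsilon$ (take $\epsilon=2^{-n}$), so the algorithm is not polynomial time in the sense required by the theorem statement. Your fallback --- compute $\tau(L_{\ve\nu})$ exactly --- does not repair this, because exact covering-radius computation is only polynomial for \emph{fixed} $d$, and is conjectured NP-hard in general (a point the paper itself makes when motivating the Babai-based route). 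The paper closes exactly this gap by replacing the linear scan with a binary search on $[0,1]$ maintaining endpoints $r^-<r^+$, shrinking the interval until $r^+-r^-\le\epsilon$; this needs only $O(l(\epsilon))$ iterations, and the invariant ``Babai fails at $r^-$, succeeds at $r^+$'' (with $r^-=0$ trivially ``failing'') yields $r^+\le 2^{d/2}\tau(L_{\ve\nu})+\epsilon$ by the same contrapositive you use. So your argument for correctness of the window is right; you just need to reach the window in logarithmically many rather than linearly many probes.
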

The proof is constructive. We present the polynomial time algorithm in Section \ref{length_bound_section}.

\subsection{Proof of Theorem \ref{length_bound}}
\label{length_bound_section}.

We need to find in polynomial time a point of the set $L_{\nu}\cap \coneS$ in a ball $B^d({\ve c}(r), r)$.   The main challenge of the proof is to choose the radius  $r\ll_d \tau(L_{\ve \nu})$ as small as possible.  Note that computing the covering radius of a lattice is conjectured in \cite{M02} to be NP-hard (see also \cite{Haviv}, \cite{Guruswami} and \cite{Achill}). The Banaszczyk transference theorem \cite{Banaszczyk}
gives the estimate
\bea
\tau(L_{\ve \nu}) \le \frac{d}{2\lambda_1(L_{\ve \nu}^*)}\,,
\eea
which allows to approximate $\tau(L_{\ve \nu})$ in polynomial time within the factor $d 2^{d/2-1}$ using the celebrated LLL algorithm  \cite{LLL}. The approximation can be then used for computing a relatively small radius $r$.

In this paper we use a slightly different approach. We will choose a suitable
radius $r$ by combining binary search in a certain interval with Babai's
{\em nearest plane algorithm}. The nearest plane algorithm finds in polynomial time an
approximation to a solution of the {\em closest vector problem}. The quality of
the approximation is given by the following result.

\begin{theo}[Babai \cite{Babaika}]
Let $L$ be a lattice of rank $d$ in $\Q^{d}$. Given any basis of $L$ and any ${\ve c}\in \Q^{d}$ as input, the nearest plane
algorithm computes a vector ${\ve x}\in L$ such that
\be
||{\ve x}-{\ve c}||\le 2^{d/2} \min_{{\ve y}\in L} ||{\ve y}-{\ve c}||\,.
\label{Babai_factor}
\ee
\label{Babai}
\end{theo}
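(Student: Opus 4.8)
The plan is to reproduce Babai's argument \cite{Babaika}. First I would describe the algorithm precisely. Given the input basis, the nearest plane algorithm begins by computing an LLL-reduced basis ${\ve b}_1,\ldots,{\ve b}_d$ of $L$ together with its Gram--Schmidt orthogonalisation $\hat{\ve b}_1,\ldots,\hat{\ve b}_d$; by \cite{LLL} this runs in polynomial time, and the standard analysis of LLL also guarantees that the rationals occurring in the ${\ve b}_j$ and $\hat{\ve b}_j$ have polynomial bit-length. It then processes the coordinates from $j=d$ down to $j=1$: starting from the residual ${\ve t}={\ve c}$, at step $j$ it replaces ${\ve t}$ by ${\ve t}-k_j{\ve b}_j$, where $k_j$ is an integer with $\langle {\ve t},\hat{\ve b}_j\rangle/\langle\hat{\ve b}_j,\hat{\ve b}_j\rangle-k_j\in[-1/2,1/2)$, so that the coefficient of $\hat{\ve b}_j$ in the new residual lies in $[-1/2,1/2)$; it outputs ${\ve x}=\sum_{j=1}^d k_j{\ve b}_j\in L$. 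Each of the $d$ steps is an exact rational computation on polynomial-size data, so the algorithm runs in polynomial time.

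Next I would extract the geometric meaning of the output. For $i<j$ the vector ${\ve b}_i$ lies in $\spn_\R(\hat{\ve b}_1,\ldots,\hat{\ve b}_i)$ and hence is orthogonal to $\hat{\ve b}_j$, so subtracting the multiples $k_i{\ve b}_i$ at the later steps $i<j$ does not alter the $\hat{\ve b}_j$-coefficient of the residual. Consequently the final residual satisfies
\bea
{\ve c}-{\ve x}=\sum_{j=1}^d t_j\hat{\ve b}_j\,,\qquad |t_j|\le \tfrac12\quad(1\le j\le d)\,.
\eea
Since the $\hat{\ve b}_j$ are pairwise orthogonal, this yields the upper bound $\|{\ve c}-{\ve x}\|^2=\sum_{j=1}^d t_j^2\|\hat{\ve b}_j\|^2\le\tfrac14\sum_{j=1}^d\|\hat{\ve b}_j\|^2$.

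For a matching lower bound on $\min_{{\ve y}\in L}\|{\ve y}-{\ve c}\|$, I would take any ${\ve y}\in L$ with ${\ve y}\ne{\ve x}$, write ${\ve y}-{\ve x}=\sum_{j=1}^d k_j{\ve b}_j$ with integers $k_j$ not all zero, and let $j^*$ be the largest index with $k_{j^*}\ne 0$. In the component of ${\ve c}-{\ve y}=({\ve c}-{\ve x})-({\ve y}-{\ve x})$ along $\hat{\ve b}_{j^*}$, the summands $k_j{\ve b}_j$ with $j<j^*$ contribute nothing (being orthogonal to $\hat{\ve b}_{j^*}$), those with $j>j^*$ vanish, and $\langle {\ve b}_{j^*},\hat{\ve b}_{j^*}\rangle=\|\hat{\ve b}_{j^*}\|^2$; hence that component equals $(t_{j^*}-k_{j^*})\|\hat{\ve b}_{j^*}\|$, of absolute value at least $(|k_{j^*}|-|t_{j^*}|)\|\hat{\ve b}_{j^*}\|\ge\tfrac12\|\hat{\ve b}_{j^*}\|\ge\tfrac12\min_{1\le j\le d}\|\hat{\ve b}_j\|$. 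Thus $\|{\ve c}-{\ve y}\|\ge\tfrac12\min_j\|\hat{\ve b}_j\|$ for every ${\ve y}\ne{\ve x}$.

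Finally I would combine the two estimates through the standard consequence $\|\hat{\ve b}_j\|^2\le 2\|\hat{\ve b}_{j+1}\|^2$ of the Lov\'asz condition for an LLL-reduced basis, which iterates to $\|\hat{\ve b}_j\|^2\le 2^{|i-j|}\|\hat{\ve b}_i\|^2$; picking $i$ to minimise $\|\hat{\ve b}_i\|$ and summing the two resulting geometric series gives $\sum_{j=1}^d\|\hat{\ve b}_j\|^2\le(2^d-1)\min_j\|\hat{\ve b}_j\|^2<2^d\min_j\|\hat{\ve b}_j\|^2$. Now if ${\ve x}$ is already a closest vector then $\|{\ve c}-{\ve x}\|=\min_{{\ve y}\in L}\|{\ve y}-{\ve c}\|$ and (\ref{Babai_factor}) holds trivially; otherwise the minimum is attained at some ${\ve y}^*\ne{\ve x}$, whence $\min_{{\ve y}\in L}\|{\ve y}-{\ve c}\|^2=\|{\ve c}-{\ve y}^*\|^2\ge\tfrac14\min_j\|\hat{\ve b}_j\|^2$, while $\|{\ve c}-{\ve x}\|^2\le\tfrac14\sum_j\|\hat{\ve b}_j\|^2<2^{d-2}\min_j\|\hat{\ve b}_j\|^2\le 2^d\min_{{\ve y}\in L}\|{\ve y}-{\ve c}\|^2$, and taking square roots gives (\ref{Babai_factor}). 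The one point needing care is getting the constant to be exactly $2^{d/2}$: this forces the sharp geometric-sum bound $\sum_j\|\hat{\ve b}_j\|^2<2^d\min_j\|\hat{\ve b}_j\|^2$ (not the cruder $d\,2^{d-1}\min_j\|\hat{\ve b}_j\|^2$) together with the case split above, the per-coordinate lower bound degenerating precisely when ${\ve x}$ is optimal --- where the bound is vacuous. The remaining work (consistency of the Gram--Schmidt data through the recursion and the polynomial bit-size certificate for LLL) is routine.
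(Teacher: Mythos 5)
The paper states this as a cited classical result of Babai and gives no proof of its own; so there is nothing in the paper to compare against, and the question is just whether your reconstruction is correct. Your description of the algorithm, the identity ${\ve c}-{\ve x}=\sum_j t_j\hat{\ve b}_j$ with $|t_j|\le\tfrac12$, and the per-coordinate lower bound $\|{\ve c}-{\ve y}\|\ge\tfrac12\|\hat{\ve b}_{j^*}\|$ for ${\ve y}\ne{\ve x}$ are all correct. The gap is in the final combining step, where you claim that the Lov\'asz condition $\|\hat{\ve b}_j\|^2\le 2\|\hat{\ve b}_{j+1}\|^2$ iterates to $\|\hat{\ve b}_j\|^2\le 2^{|i-j|}\|\hat{\ve b}_i\|^2$ and hence $\sum_j\|\hat{\ve b}_j\|^2<2^d\min_j\|\hat{\ve b}_j\|^2$. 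The iteration only runs one way: for $j<i$ it gives $\|\hat{\ve b}_j\|^2\le 2^{\,i-j}\|\hat{\ve b}_i\|^2$, but for $j>i$ it gives the reverse inequality $\|\hat{\ve b}_j\|^2\ge 2^{-(j-i)}\|\hat{\ve b}_i\|^2$, which is no upper bound at all. An LLL-reduced Gram--Schmidt profile can be, say, $\|\hat{\ve b}_1\|^2=\|\hat{\ve b}_2\|^2=1$, $\|\hat{\ve b}_3\|^2=10^4$ (e.g.\ the orthogonal basis $\mathrm{diag}(1,1,100)$), for which $\sum_j\|\hat{\ve b}_j\|^2=10002$ while $2^d\min_j\|\hat{\ve b}_j\|^2=8$. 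So the ratio $\bigl(\sum_j\|\hat{\ve b}_j\|^2\bigr)/\min_j\|\hat{\ve b}_j\|^2$ that your two one-shot bounds produce is unbounded, and the argument cannot yield any dimension-dependent constant.

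What goes wrong conceptually is that the lower bound $\tfrac12\min_j\|\hat{\ve b}_j\|$ throws away the index $j^*$, which is exactly the information that makes Babai's proof close. The standard fix is an induction on $d$ with a case split on whether the optimal ${\ve v}$ and the output ${\ve x}$ agree in the ${\ve b}_d$-coefficient. If they disagree, then $\|{\ve c}-{\ve v}\|\ge\tfrac12\|\hat{\ve b}_d\|$, and the Lov\'asz condition applied in the \emph{correct} direction gives $\sum_{j=1}^d\|\hat{\ve b}_j\|^2\le\sum_{j=1}^d 2^{\,d-j}\|\hat{\ve b}_d\|^2=(2^d-1)\|\hat{\ve b}_d\|^2$, so $\|{\ve c}-{\ve x}\|^2\le\tfrac14(2^d-1)\|\hat{\ve b}_d\|^2<2^d\|{\ve c}-{\ve v}\|^2$. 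If they agree, project out the $\hat{\ve b}_d$-direction: the $\hat{\ve b}_d$-components of ${\ve c}-{\ve x}$ and ${\ve c}-{\ve v}$ are identical, and the orthogonal complements are exactly a nearest-plane instance in the rank-$(d-1)$ sublattice $\Z{\ve b}_1+\cdots+\Z{\ve b}_{d-1}$; the inductive hypothesis gives the factor $2^{(d-1)/2}$, which combined with the shared $\hat{\ve b}_d$-component again yields at most $2^{d/2}$. Your polynomial-time analysis and the rest of the setup are fine; it is only this last step that needs to be replaced by the inductive, case-split argument.
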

Babai's nearest plane algorithm is based on using the LLL algorithm and, in fact,
makes use also of the transference principle, via Gram-Schmidt orthogonalization.
Note also that the approximation factor $2^{d/2}$ in (\ref{Babai_factor}) can be
replaced by $2^{O(d(\log\log d)^2/\log d)}$ by applying the algorithm of Schnorr
\cite{Schnorr}.

We shall now give a high level description of a polynomial-time algorithm that satisfies conditions stated in Theorem \ref{length_bound}. Given rational ${\ve \nu}$ of the form
(\ref{nu_via_p}), we first
compute a basis ${\ve u}_1, {\ve u}_2, \ldots, {\ve u}_{d}$ of the  $L_{\ve \nu}$. To perform this step, we use a link between iterations of ${\ve \nu}$ modulo one and the computational Diophantine approximations.
Next we use a version of binary search to find in the interval $[0,1]$ two rationals $r^{-}$ and $r^{+}$ with $r^{-}<r^{+}$, satisfying the following properties. First, the numbers $r^{-}$, $r^{+}$ are relatively close to each other, so that $r^{+}-r^{-}<\epsilon$. Second, Babai's nearest plane algorithm applied to ${\ve u}_1, {\ve u}_2, \ldots, {\ve u}_{d}$ and ${\ve c}={\ve c}(r^{+})$ finds a lattice point  ${\ve \xi}\in L_{\ve \nu}$
such that ${\ve \xi}\in \coneS$ and the same algorithm applied to ${\ve u}_1, {\ve u}_2, \ldots, {\ve u}_{d}$ and ${\ve c}={\ve c}(r^{-})$ fails to find a lattice point in $\coneS$.
This will imply that ${\ve \xi}$  satisfies conditions of Theorem \ref{length_bound}.

The algorithm is given below.

\vskip.5cm
{\bf Algorithm}
\begin{itemize}
\item[\em Input]: ${\ve \nu}$ of the form
(\ref{nu_via_p}) and rational $\epsilon\in (0,1)$.

\item[\em Output]: ${\ve \xi}\in L_{\ve \nu}$ satisfying conditions of Theorem \ref{length_bound}.

\item[\em Step 0]: Set $r^{-}:=0$, $r^{+}:=1$ and ${\ve \xi}:={\ve c}(1)$.

\item[\em Step 1]: Compute a basis ${\ve u}_1, {\ve u}_2, \ldots, {\ve u}_{d}$ of the lattice $L_{\ve \nu}$.

\item[\em Step 2]: {\bf While} $r^{+}-r^{-}>\epsilon$ {\bf do}

\begin{itemize}

\item[\em 2.1] Set $m:=(r^{-}+r^{+})/2$ and ${\ve c}:={\ve c}(m)$.

\item[\em 2.2]  Apply the Babai's algorithm for finding a
  nearby lattice point to the basis ${\ve u}_1, \ldots, {\ve u}_{d}$
  and the point ${\ve c}$. The algorithm
  returns a lattice point ${\ve \chi}\in L_{\ve \nu}$.

\item[\em 2.3] {\bf If} ${\ve \chi}\in \coneS$ {\bf then} set $r^{+}:=m$
{\bf else} set $r^{-}:=m$ {\bf end if}.

\end{itemize}

{\bf end while}.

\item[\em Step 3]:  Output vector ${\ve \xi}$.

\end{itemize}

Let us now analyze the algorithm. Clearly, Step 0 can be done in polynomial time.
In Step 1 we can compute a basis of $L_{\ve \nu}$ as follows. Consider the  matrix $G({\ve \nu})\in \Q^{d\times(d+1)}$ defined as

\bea
G({\ve \nu})=\left (
\begin{array}{ccccc}
1 & 0  & \ldots & 0 & p_1/q\\
0 & 1  & \ldots & 0 & p_2/q\\
\vdots & \vdots & \ddots &\vdots  & \vdots \\
0 & 0 &  \ldots & 1 & p_d/q
\end{array}
\right)\,
\eea
and denote by ${\ve g}_i$ its $i$th column vector. Observe that $L_{\ve \nu}=\{y_1{\ve g}_1+\ldots+y_{d+1}{\ve g}_{d+1}: y_1, \ldots, y_{d+1}\in \Z\}$.
Thus we can find a basis of $L_{\ve \nu}$ in polynomial time by Corollary 5.4.8 of \cite{GLS} (see also \cite{BuchPohst}).

The {\em while loop} at Step 2 is performing a binary search in the interval $[0,1]$ with approximation error bounded by $\epsilon$ and thus will be executed $O(l(\epsilon))$ times, where $l(\epsilon)$ is the length of the binary expansion of the rational number $\epsilon$.
The algorithm of Babai (see \cite{Babaika}), applied at Step 2.2, runs in polynomial time.
 Step 2.3 can be done in polynomial time as well.

Thus it is now enough to show that the vector ${\ve \xi}$ output at Step 3 satisfies conditions
of Theorem \ref{length_bound}.
By Theorem \ref{Babai}, we clearly have ${\ve \xi}\in B({\ve c}(r^+), 2^{d/2}\tau(L_{\ve \nu}))$.
Next, since  Babai's algorithm applied to ${\ve u}_1, \ldots, {\ve u}_{d}$ and the point
${\ve c}(r^{-})$ returns a lattice point outside of $\coneS$, we also conclude by Theorem
\ref{Babai} that $r^{-}\le 2^{d/2}\tau(L)$. The latter inequality together with
$r^{+}-r^{-}\le \epsilon$ implies then
\bea
\begin{split}
r^+\le 2^{d/2}\tau(L)+\epsilon\,.
\end{split}
\eea

Therefore the point ${\ve \xi}$ satisfies conditions  of Theorem \ref{length_bound}.

{\em Remark}.
It is easy to see that, in fact, we are solving in the above proof a problem of simultaneous Diophantine
approximation of rationals $p_1/q,\ldots, p_d/q$. Indeed, all points of the lattice $L_{\ve \nu}$ have
the form $(y_1-y_{d+1}p_1/q, \ldots, y_d-y_{d+1}p_d/q)$ with integer numbers $y_i$. It may also be
worthwhile using another standard approach to computing Diophantine approximations with bounded
denominators for a given rational vector. In this case, we construct a basis of a special lattice
$\Omega\in\Q^{d+1}$ with $\pi_{d+1}(\Omega)=L_{\ve \nu}$. For details, see the proof of Theorem 5.3.19
in \cite{GLS} or, for a more recent approach, Chapter 6 in \cite{LLL}.

\subsection{Approximation for the multiplicative strategy}
\label{section_mult}

For the rest of the paper we set $d=m+1$.
Given the multiplier vector ${\ve \lambda}$ of the form (\ref{standard_lambda}), we construct the augmented vector
\[
{\ve \nu} = ( \lambda_1, \ldots, \lambda_m, \nu )^T\in (0,1)^d,
\]
and attempt to find a vector ${\ve \xi}=t {\ve \nu} \mod 1$, $\xi_d>0$, with minimum  ratio
\bea
r({\ve \xi})=||\pi_d({\ve \xi})||/\xi_d\,.
\eea
Recall that for $Y\subset  \R^{d}$ by $\pi_{d}(Y)$ we understand the orthogonal projection of $Y$ onto the coordinate hyperplane $x_{d}=0$; we view $\pi_{d}(Y)$ as a subset of $\R^{d-1}$.

As it was remarked in Section 4, for any given common denominator $q>1$ there exist rational vectors ${\ve \nu}$ of the form (\ref{nu_via_p}) with $\tau(L_{\ve \nu})\gg 1$.
However, due to Theorem
\ref{probability}, for a typical ${\ve \nu}$ the covering radius of the lattice $L_{\ve \nu}$ is of order $q^{-1/d}$.   In the following
we show the existence of a  vector
${\ve \xi}=t {\ve \nu} \mod 1$, with ratio $r({\ve \xi})$  bounded in terms of the covering radius.
We also show the existence of a polynomial-time algorithm which computes an
approximation of that vector ${\ve \xi}$.

For $0<R<1/2$ set
\bea
a(d, R)=\frac{(1-R)((d-1)R^2-2R+1)^{1/2}-(d-1)^{1/2}R^2}{dR^2-2R+1}\,
\eea
and
\bea
r(d, R)=
\left\{
\begin{array}{ll}
(a(d,R)^{-2}-1)^{1/2}\, & \mbox{for}\; 0<R<1/2\,,\\
+\infty & \mbox{otherwise}\,.
\end{array}
\right.
\eea

We will first prove a simple geometric lemma.

\begin{lemma}
Let $0<R< 1/2$. Then
\be
\max\{r({\ve x}): {\ve x}\in B^d({\ve c}(R), R)\}=r(d, R)\,.
\label{isoperimetric_inequality}
\ee
\label{isoperimetric_lemma}
\end{lemma}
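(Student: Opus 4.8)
The quantity $r(\ve x) = \|\pi_d(\ve x)\|/x_d$ is invariant under scaling of $\ve x$ (as long as $x_d>0$); equivalently, $r(\ve x) = \tan\theta$, where $\theta$ is the angle between $\ve x$ and the $x_d$-axis. So maximizing $r(\ve x)$ over $\ve x \in B^d(\ve c(R),R)$ amounts to finding the point of the ball that makes the largest angle with the positive $x_d$-axis — that is, the ball is "seen" from the origin inside a cone, and we want the half-angle of that cone. The plan is to compute this angle by a direct tangent-line (tangent-cone) calculation and then translate the resulting $\cos\theta$ or $\sin\theta$ into the stated closed form $r(d,R)$.

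**Reduction to a two-dimensional picture.** Note $\ve c(R) = (R,\dots,R,1-R)$, so $\|\pi_d(\ve c(R))\| = \sqrt{d-1}\,R$ and $\|\ve c(R)\|^2 = (d-1)R^2 + (1-R)^2 = dR^2 - 2R + 1$. The whole configuration (origin, center $\ve c(R)$, and the ball) is rotationally symmetric about the line through $\ve 0$ and $\ve c(R)$ only after we also use the symmetry permuting the first $d-1$ coordinates; more simply, the extremal $\ve x$ lies in the 2-plane spanned by $\ve c(R)$ and the $x_d$-axis. Working in that plane, let $\alpha$ be the angle between $\ve c(R)$ and the $x_d$-axis, so $\cos\alpha = (1-R)/\|\ve c(R)\|$ and $\sin\alpha = \sqrt{d-1}\,R/\|\ve c(R)\|$, and let $\beta$ be the half-angle of the cone from $\ve 0$ tangent to the ball $B(\ve c(R),R)$, so $\sin\beta = R/\|\ve c(R)\|$. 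Then the maximum angle to the $x_d$-axis over the ball is $\theta_{\max} = \alpha + \beta$ (one needs $R<1/2$ precisely so that $\ve 0 \notin B(\ve c(R),R)$, i.e. $\|\ve c(R)\| > R$, which holds since $\|\ve c(R)\|^2 - R^2 = (d-1)R^2 + (1-2R) > 0$ for $R<1/2$; this also justifies why $r(d,R)=+\infty$ otherwise). Hence $\max r(\ve x) = \tan(\alpha+\beta)$.

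**Extracting the closed form.** It remains to check that $\tan(\alpha+\beta) = r(d,R) = (a(d,R)^{-2}-1)^{1/2}$, i.e. that $\cos(\alpha+\beta) = a(d,R)$. Using $\cos(\alpha+\beta) = \cos\alpha\cos\beta - \sin\alpha\sin\beta$ together with $\cos\beta = \sqrt{1-R^2/\|\ve c(R)\|^2} = \sqrt{\|\ve c(R)\|^2-R^2}/\|\ve c(R)\| = \sqrt{(d-1)R^2-2R+1}/\|\ve c(R)\|$, one gets
\[
\cos(\alpha+\beta) = \frac{(1-R)\sqrt{(d-1)R^2-2R+1} - \sqrt{d-1}\,R\cdot R}{\|\ve c(R)\|^2} = \frac{(1-R)((d-1)R^2-2R+1)^{1/2} - (d-1)^{1/2}R^2}{dR^2-2R+1},
\]
which is exactly $a(d,R)$. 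Then $r(d,R) = \tan\theta_{\max} = \sqrt{\sec^2\theta_{\max}-1} = (a(d,R)^{-2}-1)^{1/2}$, as claimed.

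**Main obstacle.** The conceptual content is the elementary-geometry claim $\theta_{\max} = \alpha+\beta$ — that the supporting cone from the origin to the ball, opened up as far as possible toward the $x_d$-axis, is realized by adding the tangent half-angle to the axis–center angle. I would justify this carefully: for any $\ve x$ in the ball, the angle $\angle(\ve x, x_d\text{-axis}) \le \angle(\ve x,\ve c(R)) + \angle(\ve c(R), x_d\text{-axis}) \le \beta + \alpha$ by the triangle inequality for angular distance on the sphere, with equality attained at the appropriate boundary point of the ball lying in the $\mathrm{span}\{\ve c(R), e_d\}$ plane (this point does have $x_d>0$, again because $R<1/2$ keeps $\alpha+\beta$ below $\pi/2$ — worth verifying, since otherwise $r$ would be unbounded or negative there). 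The rest is the routine trigonometric algebra above, which I would not belabor in the write-up.
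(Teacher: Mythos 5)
Your proof is correct and essentially matches the paper's strategy, while making explicit what the paper leaves implicit. The paper reduces to the symmetric slice by observing that, for each fixed $x_d=y$, the maximum of $\|\pi_d(\ve x)\|$ over the $(d-1)$-dimensional cross-section of the ball is attained at a point $(x,\dots,x,y)$; this turns the problem into a two-variable constrained maximization which it dismisses as ``a 2-dimensional trigonometric problem'' solved by ``straightforward computation.'' You instead read $r(\ve x)=\tan\theta$ off the bat, bound $\theta$ by $\alpha+\beta$ via the spherical triangle inequality together with the tangent-cone half-angle $\beta=\arcsin\bigl(R/\|\ve c(R)\|\bigr)$, and verify $\cos(\alpha+\beta)=a(d,R)$. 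This is the natural way to carry out the trigonometry the paper alludes to, and it has the advantage of explaining the shape of the formula $a(d,R)$ rather than deriving it by brute force. Your care about $R<1/2$ is also well placed and can be closed cleanly: one checks algebraically that
\[
(1-R)^2\bigl((d-1)R^2-2R+1\bigr)-(d-1)R^4=(1-2R)\,(dR^2-2R+1),
\]
so $a(d,R)>0$ (hence $\alpha+\beta<\pi/2$ and the tangent point has $x_d>0$) precisely when $R<1/2$, and $a(d,1/2)=0$, matching $r(d,R)=+\infty$ at $R\ge 1/2$. In short: same 2D trigonometric core, but your angular-geometry route is more transparent and supplies the details the paper omits.
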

\begin{proof}
For any fixed $1-R\le y\le 1$ , the maximum
\bea \max\{r({\ve x}): {\ve x}=(x_1, \ldots, x_{d-1}, y)\in B^d({\ve c}(R), R)\}\eea
is attained at a point of the form $(x, \ldots, x, y)$. Thus we can consider only two variables, $x$ and $y$, and (\ref{isoperimetric_inequality}) reduces to solving a 2-dimensional trigonometric problem. Straightforward computation gives
\be
\max\left\{\frac{\sqrt{d-1}\;x}{y}: (x, \ldots, x, y)\in B^d({\ve c}(R), R)\right\}=r(d, R)\,.
\label{2Dtrigonometry}
\ee
\end{proof}

By (\ref{2Dtrigonometry}), we also have $r(d, R)\ll_d R$ when $0<R<1/2$.

\begin{propo}
There exists a point ${\ve \xi}=t{\ve \nu} \mod 1$, $1\le t\le q-1$,
with
\be
r({\ve \xi})\le \min\{r(d,\tau(L_{\ve \nu})), 2\sqrt{d-1}\}\,.
\label{r_upper_bound}
\ee
\label{proposition_ratio}
\end{propo}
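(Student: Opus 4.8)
The plan is to exhibit a lattice point of $L_{\ve \nu}$ in a small ball around ${\ve c}(\tau(L_{\ve \nu}))$, using the definition of the covering radius, and then to check that such a point — after reducing it to an actual iterate $t{\ve \nu}\mod 1$ with positive last coordinate — has controlled ratio $r(\cdot)$. First I would recall that, by definition of $\tau := \tau(L_{\ve \nu})$, the translate $L_{\ve \nu} + \tau B^d$ covers $\spn_\R(L_{\ve \nu}) = \R^d$; in particular the point ${\ve c}(\tau) = (\tau,\ldots,\tau,1-\tau)$ lies within distance $\tau$ of some lattice point ${\ve w}\in L_{\ve \nu}$. (One should be slightly careful that the covering radius is an infimum, so strictly one gets a point within distance $\tau+\delta$ for any $\delta>0$; since the lattice point in question ultimately comes from a finite set of iterates plus $\Z^d$, the infimum is attained, or one passes to the limit.) Write ${\ve w} = {\ve z} + (t{\ve \nu}\mod 1)$ with ${\ve z}\in\Z^d$ and $1\le t\le q-1$; the last coordinate of ${\ve w}$ is $z_d + \{t\nu\}$, and since ${\ve w}$ is within distance $\tau < 1/2$ of ${\ve c}(\tau)$ whose last coordinate is $1-\tau > 1/2$, we must have $z_d = 0$ (as $\{t\nu\}\in[0,1)$) and hence $w_d = \{t\nu\} \ge 1-\tau-\tau = 1-2\tau > 0$. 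Thus ${\ve \xi} := t{\ve \nu}\mod 1$ has $\xi_d>0$ and, crucially, ${\ve \xi}$ and ${\ve w}$ differ only in the first $d-1$ coordinates by an integer vector, so $\|\pi_d({\ve \xi})\| \le \|\pi_d({\ve w})\|$ is \emph{not} automatic — here I need that the nearest lattice point ${\ve w}$ to ${\ve c}(\tau)$ already has small first coordinates, which follows directly from $\|{\ve w}-{\ve c}(\tau)\|\le\tau$: each of the first $d-1$ coordinates of ${\ve w}$ is within $\tau$ of $\tau$, so lies in $[0,2\tau]\subset(-1,1)$, forcing $z_1=\cdots=z_{d-1}=0$ and ${\ve w} = {\ve \xi}$ outright.

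Having identified ${\ve \xi}={\ve w}\in B^d({\ve c}(\tau),\tau)$ with $\xi_d>0$, the bound $r({\ve \xi}) \le r(d,\tau)$ is then immediate from Lemma \ref{isoperimetric_lemma} in the regime $\tau < 1/2$: indeed ${\ve \xi}\in B^d({\ve c}(\tau),\tau)$ gives $r({\ve \xi}) \le \max\{r({\ve x}) : {\ve x}\in B^d({\ve c}(\tau),\tau)\} = r(d,\tau)$. So in the range $\tau(L_{\ve \nu}) < 1/2$ the first term in the minimum on the right-hand side of (\ref{r_upper_bound}) is what we get, and we are done for that case.

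It remains to handle the case $\tau(L_{\ve \nu}) \ge 1/2$, where $r(d,\tau) = +\infty$ and the claimed bound is $r({\ve \xi}) \le 2\sqrt{d-1}$. Here I would argue differently: $\Z^d\subset L_{\ve \nu}$ gives $\tau(L_{\ve \nu})\le\tau(\Z^d)=\sqrt d/2$, so $\tau$ is in the bounded range $[1/2,\sqrt d/2]$ and I only need \emph{some} iterate with ratio at most $2\sqrt{d-1}$. The natural candidate is to take an iterate ${\ve \xi}=t{\ve \nu}\mod 1$ maximizing $\xi_d=\{t\nu\}$ (Strategy 3). Since $\nu>0$ and $\gcd$-coprimality of the $p_i,q$ forces the orbit $\{t\nu\mod 1\}$ to contain a point with $\{t\nu\}\ge 1/2$ (e.g. the multiple of $\nu$ landing in $[1/2,1)$, which exists because consecutive multiples of $\nu=p_d/q$ jump by $p_d/q$ — one must check $p_d\ge 1$, true since ${\ve \nu}\in(0,1)^d$), we get $\xi_d\ge 1/2$ for this choice. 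Then $\|\pi_d({\ve \xi})\| \le \|{\ve \xi}\|$ and each of the first $d-1$ coordinates of ${\ve \xi}=t{\ve \nu}\mod 1$ lies in $[0,1)$, so $\|\pi_d({\ve \xi})\| < \sqrt{d-1}$, whence $r({\ve \xi}) = \|\pi_d({\ve \xi})\|/\xi_d < \sqrt{d-1}/(1/2) = 2\sqrt{d-1}$, as required.

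The main obstacle I anticipate is the case-$\tau\ge 1/2$ argument — specifically, pinning down cleanly that some iterate has $\{t\nu\}$ bounded away from $0$ (the $\nu\ge 1/q$ lower bound and the structure of the orbit of $\nu$ modulo $1$ need to be used, rather than waved at), and making sure the two cases glue to give exactly the minimum of the two quantities in (\ref{r_upper_bound}) with no gap at $\tau = 1/2$. Everything else reduces to Lemma \ref{isoperimetric_lemma}, the sublattice bound (\ref{just_a_sublattice})–style inequality $\tau(L_{\ve \nu})\le\sqrt d/2$, and the elementary coordinate-range observations above.
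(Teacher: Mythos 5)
Your approach is essentially the paper's own: use the covering radius to drop a lattice point into a ball $B^d({\ve c}(\tau),\tau)$ and invoke Lemma~\ref{isoperimetric_lemma} for the $r(d,\tau)$ bound, and separately exhibit an iterate with $\{t_0\nu\}\ge 1/2$ for the $2\sqrt{d-1}$ bound. Your supporting details (the coordinate-by-coordinate argument that forces the lattice point into $[0,1)^d$, the observation that successive multiples of $\nu$ jump by $\nu<1$ and hence cannot skip $[1/2,1)$) are sound elaborations of what the paper states more tersely.

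There is, however, a genuine gap in how you organise the two bounds. In the case $\tau(L_{\ve\nu})<1/2$ you write that ``the first term in the minimum \ldots is what we get, and we are done for that case.'' This is not so: $r(d,R)\to+\infty$ as $R\to 1/2^{-}$ (one checks $a(d,1/2)=0$), so for $\tau$ just below $1/2$ the right-hand side of \eqref{r_upper_bound} is $2\sqrt{d-1}$, and your case-1 construction gives only $r({\ve\xi})\le r(d,\tau)$, which can exceed $2\sqrt{d-1}$. Your case-2 construction (an iterate with $\{t_0\nu\}\ge 1/2$) in fact works for \emph{all} $\tau$, not only $\tau\ge 1/2$; the paper presents it unconditionally at the outset and then handles $r(d,\tau)$ separately when $\tau<1/2$. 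Since the claim ``there exists ${\ve\xi}$ with $r({\ve\xi})\le\min\{a,b\}$'' is equivalent to having (possibly distinct) witnesses for each of $r({\ve\xi})\le a$ and $r({\ve\xi})\le b$, this reorganisation closes the gap; you already have both witnesses, you just restricted each to a $\tau$-range where only one of them is needed.
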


\begin{proof}
Observe first that there is a positive integer $t_0$ such that
$1/2 \le \{t_0 \nu\} < 1$. Thus for ${\ve \xi}=t_0 {\ve \nu} \mod 1$, we have
\bea
r({\ve \xi})< 2\sqrt{d-1}\,.
\eea
This justifies the second bound in (\ref{r_upper_bound}).

Recall that the iterations $t{\ve \nu} \mod 1$ can be naturally embedded in the lattice $L_{\ve \nu}$.
Thus, it is enough to show that there exists a nonzero point ${\ve \xi}\in L_{\ve \nu}\cap [0,1)^d$ that satisfies the first inequality in (\ref{r_upper_bound}).
If $\tau(L_{\ve \nu})\ge 1/2$, the latter inequality holds by the definition of $r(d, R)$.
Suppose that $\tau(L_{\ve \nu})< 1/2$. By the definition of the covering radius there exists a point ${\ve \xi}\in L_{\ve \nu}\cap B^d({\ve c}(\tau(L_{\ve \nu})), \tau(L_{\ve \nu}))$. Since $\tau(L_{\ve \nu})< 1/2$, the point ${\ve \xi}$ is in $[0,1)^d$. The  first inequality in (\ref{r_upper_bound}) now holds by Lemma \ref{isoperimetric_lemma}.
\end{proof}

On the algorithmic side, Theorem \ref{length_bound} implies the following result.

\begin{coro}
There is a polynomial-time algorithm which, given an augmented vector ${\ve \nu}=(\lambda_1,\ldots,\lambda_{m}, \nu)$ of the form
(\ref{nu_via_p}) and any rational $\epsilon\in (0,1)$, finds a point ${\ve \xi}=t{\ve \nu} \mod 1$, $1\le t\le q-1$, with
\be
r({\ve \xi})< \min\{r(d,2^{d/2}\tau(L_{\ve \nu})+\epsilon), 2\sqrt{d-1}\}\,.
\label{r_algorithmic_upper_bound}
\ee
\label{sim_ratio}
\end{coro}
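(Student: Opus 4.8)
\emph{Proof proposal.} The plan is to feed the pair $({\ve \nu},\epsilon)$ into the algorithm of Theorem \ref{length_bound}, reinterpret its output as an iterate, and pair it with one cheap ``Gomory--type'' iterate so as to recover \emph{both} terms of the minimum in (\ref{r_algorithmic_upper_bound}). Write $R=2^{d/2}\tau(L_{\ve \nu})+\epsilon$. Running the algorithm of Theorem \ref{length_bound} produces, in polynomial time, a point ${\ve \xi}^{\circ}\in L_{\ve \nu}\cap \coneS$ with ${\ve \xi}^{\circ}\in B^d({\ve c}(r), 2^{d/2}\tau(L_{\ve \nu}))$ for some rational $r$ with $0<r\le R$. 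In parallel, exactly as in the first paragraph of the proof of Proposition \ref{proposition_ratio}, one computes in polynomial time a positive integer $t_0$ with $1/2\le \{t_0\nu\}<1$ and sets ${\ve \xi}^{(1)}=t_0{\ve \nu}\bmod 1$; then $\xi^{(1)}_d\ge 1/2$ and $\|\pi_d({\ve \xi}^{(1)})\|<\sqrt{d-1}$, so $r({\ve \xi}^{(1)})<2\sqrt{d-1}$. The algorithm then outputs whichever of ${\ve \xi}^{(1)}$ and ${\ve \xi}^{\circ}\bmod 1$ (the latter only when its last coordinate is positive) has the smaller value of $r(\cdot)$; both are of the form $t{\ve \nu}\bmod 1$ with $1\le t\le q-1$, and by construction the output already satisfies $r(\text{output})\le r({\ve \xi}^{(1)})<2\sqrt{d-1}$.

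It remains to bound the output by $r(d,R)$, and here one may assume $R<1/2$, since otherwise $r(d,R)=+\infty$ by definition and nothing is needed. Assuming $R<1/2$, the inequalities $r\le R$ and $2^{d/2}\tau(L_{\ve \nu})\le R$ together with ${\ve \xi}^{\circ}\in \coneS$ force, by the same elementary estimate used in the proof of Proposition \ref{proposition_ratio}, that ${\ve \xi}^{\circ}\in[0,1)^d$ with $\xi^{\circ}_d\ge 1-R-2^{d/2}\tau(L_{\ve \nu})>0$; hence ${\ve \xi}^{\circ}\bmod 1={\ve \xi}^{\circ}$ is a genuine non-trivial iterate $t{\ve \nu}\bmod 1$, and it is among the candidates compared. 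To bound $r({\ve \xi}^{\circ})$ I would use the following refinement of Lemma \ref{isoperimetric_lemma}: for $s\ge 0$ and $0<\rho<1-s$ the quantity
\[
\widetilde r(d,s,\rho)=\max\{r({\ve x}):{\ve x}\in B^d({\ve c}(s),\rho)\}
\]
is finite and nondecreasing in each of $s$ and $\rho$, and $\widetilde r(d,R,R)=r(d,R)$ by Lemma \ref{isoperimetric_lemma}. Granting this, $r({\ve \xi}^{\circ})\le \widetilde r(d,r,2^{d/2}\tau(L_{\ve \nu}))\le \widetilde r(d,R,2^{d/2}\tau(L_{\ve \nu}))<\widetilde r(d,R,R)=r(d,R)$, the last step being strict because $\epsilon>0$ gives $2^{d/2}\tau(L_{\ve \nu})<R$. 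Combined with the bound from the previous paragraph this yields $r(\text{output})<\min\{r(d,R),2\sqrt{d-1}\}$, which is (\ref{r_algorithmic_upper_bound}). Polynomiality is clear: the call to the algorithm of Theorem \ref{length_bound} is polynomial, $t_0$ and ${\ve \xi}^{(1)}$ cost one integer division, and reducing ${\ve \xi}^{\circ}$ modulo one and comparing two rational values of $r(\cdot)$ are immediate.

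The only step that is not routine is the monotonicity of $\widetilde r(d,s,\rho)$. One cannot simply quote Lemma \ref{isoperimetric_lemma}, because the ball produced by the Babai--based search is $B^d({\ve c}(r),2^{d/2}\tau(L_{\ve \nu}))$, whose radius and centre--parameter need not agree — in contrast with the balanced ball $B^d({\ve c}(\tau(L_{\ve \nu})),\tau(L_{\ve \nu}))$ that appears in Proposition \ref{proposition_ratio} — and the naive axis--parallel estimate $r({\ve \xi}^{\circ})\le(\sqrt{d-1}\,R+2^{d/2}\tau(L_{\ve \nu}))/(1-R-2^{d/2}\tau(L_{\ve \nu}))$ is too weak, since it exceeds $r(d,R)$ as $R\uparrow 1/2$. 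The monotonicity itself should follow from the same reduction used in the proof of Lemma \ref{isoperimetric_lemma}: a maximiser of $r(\cdot)$ over $B^d({\ve c}(s),\rho)$ can be taken of the form $(x,\ldots,x,y)$, leaving the two--variable task of checking that the maximum of $\sqrt{d-1}\,x/y$ over the weighted disk $(d-1)(x-s)^2+(y-(1-s))^2\le\rho^2$ increases when $\rho$ grows and when $s$ increases (the disk translating toward the origin in the direction that raises $x$ and lowers $y$). This is where the care is needed; the remainder is bookkeeping around the two cases $R<1/2$ and $R\ge 1/2$ and the check that ${\ve \xi}^{\circ}$ lands in $[0,1)^d$ with positive last coordinate.
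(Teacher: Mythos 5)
Your proposal follows the paper's own route — invoke Theorem~\ref{length_bound} for the first term of the minimum, pair it with a Gomory-type iterate $t_0$ with $\{t_0\nu\}\ge 1/2$ for the second term, and output the better of the two — but you have spotted a real subtlety that the paper elides. The paper asserts that the first bound ``immediately follows'' from Theorem~\ref{length_bound} and Lemma~\ref{isoperimetric_lemma} with $R=2^{d/2}\tau(L_{\ve\nu})+\epsilon$, yet the algorithm returns a point of $B^d({\ve c}(r),2^{d/2}\tau(L_{\ve\nu}))$ with $0<r\le R$, and this ball is generally \emph{not} contained in $B^d({\ve c}(R),R)$: since $\|{\ve c}(r)-{\ve c}(R)\|=\sqrt{d}\,(R-r)$, the containment would require $\sqrt{d}\,(R-r)\le\epsilon$, which the binary search does not guarantee (the returned $r$ can be much smaller than $R$). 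Your monotonicity lemma for $\widetilde r(d,s,\rho)=\max\{r({\ve x}):{\ve x}\in B^d({\ve c}(s),\rho)\}$ is exactly the missing step, and it is provable by the same symmetrization as Lemma~\ref{isoperimetric_lemma}: after reducing to coordinates $(x,\dots,x,y)$, the tangency condition $k(1-s)-s=\rho\sqrt{1+k^2}$ defines the optimal slope $k$ implicitly, and one checks $\partial k/\partial s>0$ and $\partial k/\partial\rho>0$ directly from the implicit function theorem (using $s+\rho<1$ to get $\partial_k\bigl(k(1-s)-s-\rho\sqrt{1+k^2}\bigr)>0$). The remaining bookkeeping — that $R\ge 1/2$ is vacuous, that ${\ve\xi}^\circ\in[0,1)^d$ with $\xi^\circ_d>0$ when $R<1/2$, and the strictness coming from $2^{d/2}\tau(L_{\ve\nu})<R$ — is all correct. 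So your argument is right; to make it self-contained you should write out the two implicit derivatives rather than leaving the monotonicity as a claim, since that is precisely the step the paper skips.
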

\begin{proof}
The first bound in (\ref{r_algorithmic_upper_bound}) immediately follows from Theorem \ref{length_bound} and Lemma \ref{isoperimetric_lemma}, where we take $R=2^{d/2}\tau(L_{\ve \nu})+\epsilon$.

Next, if $\nu\ge 1/2$, we have $r({\ve \nu})<2\sqrt{d-1}$, so the second bound in (\ref{r_algorithmic_upper_bound}) holds for ${\ve \xi}={\ve \nu}$.
If $0 < \nu < 1/2$, then we can take ${\ve \xi}=t_0 {\ve \nu} \mod 1$ with $t_0=\lfloor 1 / \nu \rfloor$ when $\lfloor 1 / \nu \rfloor \nu\neq 1$ and  $t_0=\lfloor 1 / \nu \rfloor-1$ otherwise.
\end{proof}

\subsection{Approximation for the additive strategy}
\label{section_add}

Now we move on to the additive strategy. As in the previous section, for a non-trivial CG-cut
(\ref{eq:CGC}) with ${\ve \lambda}$ of the form (\ref{standard_lambda}) we construct the
augmented vector ${\ve \nu}=(\lambda_1, \ldots, \lambda_m, \nu)^T$. One can easily obtain
the following bound for Problem \ref{SCP}.

\begin{propo}
There exists a point ${\ve \xi}=t{\ve \nu} \mod 1$, $1\le t\le q-1$,
with
\be
N({\ve \xi})\le (1+\sqrt{d})\tau(L_{\ve \nu})\,.
\label{bound_for_xi}
\ee
Furthermore,
\be
\{t\nu\}>0\;\mbox{whenever}\; \tau(L_{\ve \nu})<1/2\,.
\label{positive_rounding}
\ee

\label{theoretical_bound}
\end{propo}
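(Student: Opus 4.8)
The plan is to bound $N(\boldsymbol{\xi})$ directly using the definition of the covering radius applied at a conveniently chosen centre, mirroring the argument used in Proposition~\ref{proposition_ratio}. Recall $N(\mathbf{x}) = \|(x_1,\ldots,x_{d-1},1-x_d)\| = \|\mathbf{x} - \mathbf{e}_d\|$ where $\mathbf{e}_d = (0,\ldots,0,1)$. So minimising $N$ over $L_{\boldsymbol\nu}$ is exactly a closest-vector computation for the target point $\mathbf{e}_d$. By the definition of $\tau(L_{\boldsymbol\nu})$ as the inhomogeneous minimum of the unit ball, there is a lattice point $\mathbf{w}\in L_{\boldsymbol\nu}$ with $\|\mathbf{w} - \mathbf{e}_d\| \le \tau(L_{\boldsymbol\nu})$; equivalently $N(\mathbf{w})\le \tau(L_{\boldsymbol\nu})$. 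The issue is that such a $\mathbf{w}$ need not be of the form $t\boldsymbol\nu \bmod 1$ with positive last coordinate: it lives in $L_{\boldsymbol\nu} = \mathbb{Z}^d + \mathbb{Z}\boldsymbol\nu$, so $\mathbf{w} = \mathbf{z} + (t\boldsymbol\nu\bmod 1)$ for some $\mathbf{z}\in\mathbb{Z}^d$ and some $t$, and we must control the effect of subtracting off $\mathbf{z}$ to land back in $[0,1)^d$.

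First I would take $\mathbf{w}\in L_{\boldsymbol\nu}$ with $N(\mathbf{w})=\|\mathbf{w}-\mathbf{e}_d\|\le\tau(L_{\boldsymbol\nu})$ and write $\mathbf{w} = \mathbf{z} + \boldsymbol\xi$ with $\boldsymbol\xi = t\boldsymbol\nu\bmod 1\in[0,1)^d$ and $\mathbf{z}\in\mathbb{Z}^d$; we may assume $\mathbf{w}\neq\mathbf{0}$ and also $\boldsymbol\xi\neq\mathbf{0}$ (if $\mathbf{w}\in\mathbb{Z}^d$ one checks $N(\mathbf{w})\geq$ something, or simply picks a non-integer coset representative — this edge case needs a short remark). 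Since $|\,w_i - (\mathbf{e}_d)_i\,|\le\tau(L_{\boldsymbol\nu})$ for each coordinate $i$, and since $\xi_i\in[0,1)$, the integer $z_i = w_i - \xi_i$ is forced: for $i<d$ we get $|z_i + \xi_i|\le\tau$, and for $i=d$ we get $|z_d + \xi_d - 1|\le\tau$. When $\tau(L_{\boldsymbol\nu})<1/2$ this pins down $z_i=0$ for $i<d$ (since $\xi_i\ge 0$ and $\xi_i<1$, the only way $|z_i+\xi_i|<1/2$ with $z_i\in\mathbb{Z}$ is $z_i=0$) and $z_d=0$ or $z_d=1$; in fact $z_d=0$ forces $\xi_d$ close to $1$ so $\xi_d>0$, while $z_d=1$ gives $w_d = 1+\xi_d$, contradicting $|w_d-1|<1/2$ unless... — here one simply concludes $\boldsymbol\xi = \mathbf{w}$ and $N(\boldsymbol\xi)=N(\mathbf{w})\le\tau(L_{\boldsymbol\nu})$, which is even stronger than \eqref{bound_for_xi}, and $\xi_d = w_d \geq 1-\tau > 1/2 > 0$, giving \eqref{positive_rounding}.

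For the general bound \eqref{bound_for_xi} (valid without the assumption $\tau<1/2$), I would argue as follows. If $\tau(L_{\boldsymbol\nu})\ge 1/2$ then $(1+\sqrt d)\tau\ge (1+\sqrt d)/2$, and one just needs any non-trivial iterate: take $t_0$ with $1/2\le\{t_0\nu\}<1$, so $\boldsymbol\xi = t_0\boldsymbol\nu\bmod 1$ has $\xi_d = \{t_0\nu\}\in[1/2,1)$, hence $N(\boldsymbol\xi)^2 = \|\pi_d(\boldsymbol\xi)\|^2 + (1-\xi_d)^2 < (d-1)\cdot 1 + (1/2)^2$, which is at most $((1+\sqrt d)/2)^2$ after a routine check (here $d\ge 2$). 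If $\tau(L_{\boldsymbol\nu})<1/2$, the previous paragraph already gives $N(\boldsymbol\xi)\le\tau\le(1+\sqrt d)\tau$. The main obstacle is the bookkeeping in the case analysis — making sure the coset representative $\boldsymbol\xi$ extracted from the near-point $\mathbf{w}$ genuinely has a strictly positive last coordinate and is a bona fide iterate $t\boldsymbol\nu\bmod 1$ with $1\le t\le q-1$ rather than $t=0$; this is exactly where the hypothesis $\tau(L_{\boldsymbol\nu})<1/2$ is used in \eqref{positive_rounding}, and the coordinate-wise estimate above is the mechanism that makes it work. The final assembly is then just reading off the two displays.
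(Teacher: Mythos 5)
Your proposal has a genuine gap, and it is not a cosmetic one. You choose to apply the covering-radius property at the centre $\mathbf{e}_d=(0,\ldots,0,1)$, noting that $N(\mathbf{x})=\|\mathbf{x}-\mathbf{e}_d\|$ and so minimising $N$ over the lattice is a closest-vector problem to $\mathbf{e}_d$. The problem is that $\mathbf{e}_d\in\mathbb{Z}^d\subset L_{\boldsymbol\nu}$ \emph{always}: the covering-radius guarantee ``there is $\mathbf{w}\in L_{\boldsymbol\nu}$ with $\|\mathbf{w}-\mathbf{e}_d\|\le\tau$'' is vacuous at this centre, since $\mathbf{w}=\mathbf{e}_d$ witnesses it, and that $\mathbf{w}$ reduces mod $1$ to $\boldsymbol\xi=\mathbf{0}$, i.e.\ $t=0$. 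Your remark ``we may assume $\boldsymbol\xi\neq\mathbf{0}$'' is exactly the thing that needs justification and there is no way to justify it from this centre: no non-trivial iterate need lie that close. Concretely, the stronger bound $N(\boldsymbol\xi)\le\tau(L_{\boldsymbol\nu})$ that your argument claims to deliver is \emph{false}: take $d=2$ and $\boldsymbol\nu=(1/3,1/3)$, so the only iterates are $(1/3,1/3)$ and $(2/3,2/3)$, both with $N=\sqrt{5}/3\approx 0.745$, while $\tau(L_{\boldsymbol\nu})=\sqrt{5}/6\approx 0.373<1/2$. So the approach cannot be patched merely by ``a short remark.''

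The paper avoids this by applying the covering-radius property at the shifted centre $\mathbf{c}(\tau)=(\tau,\ldots,\tau,1-\tau)$ rather than at $\mathbf{c}(0)=\mathbf{e}_d$. For $\tau<1/2$ the ball $B^d(\mathbf{c}(\tau),\tau)$ lies entirely in $\coneS\cap[0,1)^d$ and contains no integer point (the only candidate $\mathbf{e}_d$ is at distance $\tau\sqrt{d}>\tau$), so the lattice point $\boldsymbol\chi$ it produces is automatically a non-trivial iterate with positive last coordinate; the price for the shift is exactly the triangle-inequality factor $N(\boldsymbol\chi)\le\|\boldsymbol\chi-\mathbf{c}(\tau)\|+\|\mathbf{c}(\tau)-\mathbf{c}(0)\|\le\tau+\tau\sqrt{d}$, which is where $(1+\sqrt{d})\tau$ comes from. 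The case $\tau\ge 1/2$ is handled separately. Secondary issue in your write-up: even granting a suitable $\mathbf{w}$, the claim that $z_d=1$ ``contradicts $|w_d-1|<1/2$'' is wrong; $z_d=1$, $\xi_d\in[0,1/2)$ is perfectly consistent with $|w_d-1|<1/2$, so you cannot conclude $\boldsymbol\xi=\mathbf{w}$ that way — which is a second reason the positivity $\{t\nu\}>0$ doesn't follow from your set-up.
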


\begin{proof}

Observe that the set $B^d({\ve c}(0), (1+\sqrt{d})\tau(L_{\ve \nu}))\cap \coneS$ contains the ball $B^d({\ve c}(\tau(L_{\ve \nu})), \tau(L_{\ve \nu}))$.
By the definition of the covering radius there exists a point ${\ve \chi}\in L_{\ve \nu}\cap B^d({\ve c}(\tau(L_{\ve \nu})), \tau(L_{\ve \nu}))$, so that $N({\ve \chi})\le (1+\sqrt{d})\tau(L_{\ve \nu})$.
If ${\ve \chi}\in \Z^d$ then we may assume without loss of generality that ${\ve \chi}={\ve c}(1)$. Thus in this case we can take ${\ve \xi}={\ve \nu}$.
Otherwise, since ${\ve \chi}\in \coneS\setminus \Z^d$, we have $0<N({\ve \chi}\mod 1)\le N({\ve \chi})$.
Thus, the point ${\ve \xi}={\ve \chi}\mod 1$ satisfies condition (\ref{bound_for_xi}).

Suppose now that $\tau(L_{\ve \nu})<1/2$. Then for all sufficiently small $\epsilon>0$ the ball $B^d({\ve c}((\tau(L_{\ve \nu})+\epsilon)), \tau(L_{\ve \nu}))$ contains
a  point of the set $L_{\ve \nu}\cap (0,1)^d$. Since $L_{\ve \nu}$ is a discrete set, we conclude that there exists a point ${\ve \xi}\in L_{\ve \nu}\cap B^d({\ve c}(\tau(L_{\ve \nu})), \tau(L_{\ve \nu}))\cap (0,1)^d$.
This point clearly satisfies (\ref{positive_rounding}).
\end{proof}

On the other hand, Theorem  \ref{length_bound} implies the following

\begin{coro}
There is a polynomial-time algorithm which, given an augmented vector
${\ve \nu}=(\lambda_1,\ldots,\lambda_{m}, \nu)$ of the form
(\ref{nu_via_p}) and any rational $\delta\in (0,1)$, finds a point
${\ve \xi}=t{\ve \nu} \mod 1$, $1\le t\le q-1$, with
\be
N({\ve \xi})< (1+\sqrt{d})2^{d/2}\tau(L_{\ve \nu})+\delta\,.
\label{alg_bound_for_xi_add}
\ee
Furthermore,
\be
\{t\nu\}>0\;\mbox{whenever}\; \tau(L_{\ve \nu})< 2^{-d/2-1}(1-\delta/\lceil \sqrt{d}\rceil)\,.
\label{positive_rounding_alg}
\ee
\label{sim}
\end{coro}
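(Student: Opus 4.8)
The plan is to derive Corollary \ref{sim} directly from Theorem \ref{length_bound}, in exactly the same spirit as the proof of Corollary \ref{sim_ratio}. Applying Theorem \ref{length_bound} with the given rational $\epsilon$ produces, in polynomial time, a point ${\ve \xi}\in L_{\ve \nu}\cap \coneS$ lying in $B^d({\ve c}(r), 2^{d/2}\tau(L_{\ve \nu}))$ with $0<r\le 2^{d/2}\tau(L_{\ve \nu})+\epsilon$. First I would translate the membership ${\ve \xi}\in B^d({\ve c}(r),\rho)$, where $\rho=2^{d/2}\tau(L_{\ve \nu})$, into a bound on $N({\ve \xi})$. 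Writing ${\ve c}(r)=(r,\ldots,r,1-r)$ and recalling that $N({\ve x})=\|(x_1,\ldots,x_{d-1},1-x_d)\|$, the point ${\ve c}(r)$ has $N({\ve c}(r))=\|(r,\ldots,r,r)\|=\sqrt{d}\,r$. Since ${\ve x}\mapsto N({\ve x})$ differs from the Euclidean norm centred at ${\ve c}(0)=(0,\ldots,0,1)$ only by a reflection in the last coordinate, it is $1$-Lipschitz, so $N({\ve \xi})\le N({\ve c}(r))+\|{\ve \xi}-{\ve c}(r)\|\le \sqrt{d}\,r+\rho\le \sqrt{d}(2^{d/2}\tau(L_{\ve \nu})+\epsilon)+2^{d/2}\tau(L_{\ve \nu})=(1+\sqrt{d})2^{d/2}\tau(L_{\ve \nu})+\sqrt{d}\,\epsilon$. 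Choosing $\epsilon=\delta/\lceil\sqrt{d}\rceil$ (a rational in $(0,1)$) makes $\sqrt{d}\,\epsilon\le\delta$, giving \eqref{alg_bound_for_xi_add}. It remains only to convert ${\ve \xi}\in L_{\ve \nu}$ into the form $t{\ve \nu}\mod 1$: since ${\ve \xi}\in L_{\ve \nu}=\Z^d+\Z{\ve \nu}$, we have ${\ve \xi}\equiv t{\ve \nu}\pmod 1$ for some integer $t$, and as ${\ve \xi}\in\coneS\subset$ (after reducing mod $1$ if necessary, which does not increase $N$) the relevant region, we may take $1\le t\le q-1$, exactly as in Proposition \ref{theoretical_bound}.

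For the second assertion \eqref{positive_rounding_alg}, I would argue that when $\tau(L_{\ve \nu})$ is small the radius $r$ returned by the algorithm is automatically positive and bounded away from $0$ in a way that forces $\xi_d=\{t\nu\}>0$. Concretely, the point ${\ve \xi}$ lies in $B^d({\ve c}(r),2^{d/2}\tau(L_{\ve \nu}))$ with $r>0$ guaranteed by \eqref{alg_bound_for_xi}; its last coordinate satisfies $\xi_d\ge (1-r)-2^{d/2}\tau(L_{\ve \nu})$. Hence $\xi_d>0$ as soon as $r+2^{d/2}\tau(L_{\ve \nu})<1$. Using $r\le 2^{d/2}\tau(L_{\ve \nu})+\epsilon$ with $\epsilon=\delta/\lceil\sqrt{d}\rceil$, this holds whenever $2\cdot 2^{d/2}\tau(L_{\ve \nu})+\delta/\lceil\sqrt d\rceil<1$, i.e. $\tau(L_{\ve \nu})<2^{-d/2-1}(1-\delta/\lceil\sqrt{d}\rceil)$, which is exactly \eqref{positive_rounding_alg}. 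One subtlety is that $\xi_d$ is the last coordinate of the lattice point ${\ve \xi}$, which is a value $t{\ve \nu}\mod 1$ in $[0,1)$, so $\xi_d>0$ is precisely the statement $\{t\nu\}>0$; I would spell this out to match the setup in Section \ref{se:rules}.

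The polynomial running time is inherited verbatim from Theorem \ref{length_bound}: the only additional work is forming the rational $\epsilon=\delta/\lceil\sqrt{d}\rceil$ and extracting the integer $t$ with ${\ve \xi}\equiv t{\ve \nu}\pmod 1$, both of which are polynomial-time operations (the latter via the basis computation already described in the proof of Theorem \ref{length_bound}, which recovers the Diophantine-approximation data $y_1,\ldots,y_{d+1}$ and hence $t$). I do not expect any real obstacle here; the one point requiring a little care is making sure the reflection-invariance argument for the $1$-Lipschitz property of $N$ is stated cleanly, and that the reduction of ${\ve \xi}$ modulo $1$ (should the raw lattice point lie outside $[0,1)^d$) neither increases $N$ nor destroys the positivity of the last coordinate — both follow because reducing the first $d-1$ coordinates mod $1$ can only decrease the corresponding terms of $N$, and the last coordinate of a point of $\coneS$ with the relevant bounds is already in $[0,1)$.
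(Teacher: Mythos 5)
Your proof is correct and follows the same route as the paper: apply Theorem~\ref{length_bound} with $\epsilon=\delta/\lceil\sqrt d\rceil$, bound $N(\boldsymbol\xi)$ by $N(\boldsymbol c(r))+2^{d/2}\tau(L_{\boldsymbol\nu})=\sqrt d\,r+2^{d/2}\tau(L_{\boldsymbol\nu})$ via the triangle inequality, and then use $r\le 2^{d/2}\tau(L_{\boldsymbol\nu})+\epsilon$. Your handling of the second claim is in fact a bit tighter than the paper's: you derive $\xi_d\ge (1-r)-2^{d/2}\tau(L_{\boldsymbol\nu})>0$ directly from the ball containment, whereas the paper merely asserts that $\boldsymbol\xi\in\coneS$ gives $\xi_d>0$ (but $\coneS=[0,\infty)^{d-1}\times(-\infty,1)$ only enforces $\xi_d<1$, so the positivity really does need the ball bound you supply; the paper's displayed inequality $\xi_d\le r+2^{d/2}\tau(L)$ should be $1-\xi_d\le r+2^{d/2}\tau(L)$). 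Your remarks about recovering $t$ and reducing modulo $1$ without increasing $N$ match the treatment already used in the proof of Proposition~\ref{theoretical_bound}.
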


\begin{proof}

By Theorem \ref{length_bound}, given ${\ve \nu}=(\lambda_1,\ldots,\lambda_{m}, \nu)$ and $\epsilon=\delta/\lceil \sqrt{d}\rceil\in (0,1)$ we can compute in polynomial time a point ${\ve \xi}\in L_{\ve \nu}\cap \coneS$ such that ${\ve \xi}\in  B({\ve c}(r), 2^{d/2}\tau(L_{\ve \nu}))$ with $0<r\le 2^{d/2}\tau(L_{\ve \nu})+\epsilon$. Thus $N({\ve \xi})\le N({\ve c}(r))+2^{d/2}\tau(L)$ and, consequently,
\bea
N({\ve \xi})\le (2^{d/2}\tau(L)+\epsilon)\sqrt{d}+2^{d/2}\tau(L)
\le (1+\sqrt{d})2^{d/2}\tau(L)+\delta.
\eea
Therefore the point ${\ve \xi}$ satisfies the inequality (\ref{alg_bound_for_xi_add}).

Suppose now that $\tau(L_{\ve \nu})<2^{-d/2-1}(1-\delta/\lceil \sqrt{d}\rceil)$. Clearly, $\{t\nu\}=\{\xi_d\}$, so it is enough to show that  $\xi_d\in (0,1)$. Since ${\ve \xi}\in \coneS$,  the number $\xi_d$ is positive. On the other hand, we have
\bea
\xi_d\le r + 2^{d/2}\tau(L) \le 2^{d/2+1}\tau(L)+\delta/\lceil \sqrt{d}\rceil<1\,.
\eea
\end{proof}

\subsection{Approximation error}

As it is shown in Sections \ref{section_mult} and \ref{section_add}, the computed approximations of the optimal values of $r({\ve \xi})$ and $N({\ve \xi})$ are bounded in terms of the covering radius and thus are small for a typical augmented vector. We conjecture that the iterated CG-cuts found by the algorithms obtained in Corollaries \ref{sim_ratio} and \ref{sim} solve problems (\ref{SM}) and (\ref{SCP}), respectively, with the multiplicative approximation error $2^{O(d)}$.
In this section we prove the second conjecture for the special case $\tau(L_{\ve \nu})\ll q^{-1/d}$, where $\ll$ is the Vinogradov symbol.

Let ${\ve \nu}=(\lambda_1,\ldots,\lambda_{m}, \nu)$ be a vector of the form
(\ref{nu_via_p}) and let  $\delta\in (0,1)\cap \Q$.
We will denote by $m_{add}=m_{add}({\ve \nu})$ the value of the minimum in (\ref{SCP}), that is
\bea
m_{add}({\ve \nu})=\min \left\{ N(t{\ve \nu}\mod 1): \;
t=1,\ldots,q-1, \; \{t \nu\}>0 \right\}.
\eea
We will also denote by ${\ve \xi}_{add}={\ve \xi}_{add}({\ve \nu}, \delta)$ the output vector of the algorithm obtained in Corollary \ref{sim}.

\begin{propo} Let ${\ve \nu}$ be a vector of the form
(\ref{nu_via_p}) with common denominator $q$. Then
\be
\frac{N({\ve \xi}_{add}({\ve \nu}, 1/q))}{m_{add}({\ve \nu})}< 2^{3d/2-1} (1+\sqrt{d})\tau(L_{\ve \nu})^dq+1\,.
\label{approx_error_inequ}
\ee
\label{approx_error}
\end{propo}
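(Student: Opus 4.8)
The plan is to combine the algorithmic upper bound of Corollary~\ref{sim} with two lower bounds on $m_{add}({\ve \nu})$. For the upper bound, specialising Corollary~\ref{sim} to $\delta=1/q$ (legitimate, since $q\ge 2$ for every ${\ve \nu}$ of the form (\ref{nu_via_p})) gives at once
\[
N({\ve \xi}_{add}({\ve \nu},1/q)) < (1+\sqrt d)\,2^{d/2}\,\tau(L_{\ve \nu}) + \tfrac1q .
\]

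Next I would bound $m_{add}({\ve \nu})$ from below in two ways. The key observation is the identity $N({\ve x})=\|{\ve x}-{\ve e}_d\|$, where ${\ve e}_d$ is the last standard basis vector: it holds because the Euclidean norm is unaffected by a sign flip of the last coordinate. Consequently, for any iterate $t$ with $\{t\nu\}>0$, the vector $(t{\ve \nu}\bmod 1)-{\ve e}_d$ lies in $L_{\ve \nu}=\Z^d+\Z{\ve \nu}$, is nonzero (its last coordinate is $\{t\nu\}-1\in(-1,0)$), and has length exactly $N(t{\ve \nu}\bmod 1)$; hence $m_{add}({\ve \nu})\ge\lambda_1(L_{\ve \nu})$. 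Since moreover $\{t\nu\}\in q^{-1}\Z$, one has $|1-\{t\nu\}|\ge 1/q$, so also $m_{add}({\ve \nu})\ge 1/q$.

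The third ingredient is a lower bound on $\lambda_1(L_{\ve \nu})$ in terms of $\tau(L_{\ve \nu})$ and $q$. I would use: (a) $\det(L_{\ve \nu})=1/q$, because $\Z^d$ is a sublattice of $L_{\ve \nu}$ of index exactly $q$ (the hypothesis $\gcd(p_1,\dots,p_d,q)=1$ forces ${\ve \nu}$ to have order $q$ modulo $\Z^d$); (b) $\prod_{i=1}^d\lambda_i(L)\ge\det(L)$ for every lattice $L$, which follows from Hadamard's inequality applied to linearly independent vectors realising the successive minima together with the integrality of the index of the sublattice they generate; and (c) $\lambda_d(L)\le 2\tau(L)$, the standard inequality between the last successive minimum and the covering radius (the lattice points closest to a deepest hole affinely span the ambient space). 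Combining (b) and (c) for $i=2,\dots,d$,
\[
\lambda_1(L_{\ve \nu})=\frac{\prod_{i=1}^d\lambda_i(L_{\ve \nu})}{\prod_{i=2}^d\lambda_i(L_{\ve \nu})}\ \ge\ \frac{\det(L_{\ve \nu})}{\bigl(2\tau(L_{\ve \nu})\bigr)^{d-1}}\ =\ \frac{1}{\bigl(2\tau(L_{\ve \nu})\bigr)^{d-1}q}.
\]

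Finally I would assemble the pieces: feeding $m_{add}({\ve \nu})\ge \bigl(2\tau(L_{\ve \nu})\bigr)^{-(d-1)}q^{-1}$ into the first (dominant) summand on the right of (\ref{approx_error_inequ}), $m_{add}({\ve \nu})\ge 1/q$ into the additive summand, and using $2^{3d/2-1}\tau^d q\cdot(2\tau)^{-(d-1)}q^{-1}=2^{d/2}\tau$, one gets that the right-hand side of (\ref{approx_error_inequ}) is at least $(1+\sqrt d)2^{d/2}\tau(L_{\ve \nu})+1/q$, which strictly exceeds the bound on $N({\ve \xi}_{add}({\ve \nu},1/q))$ from the first displayed inequality; this is exactly (\ref{approx_error_inequ}). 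No step is a serious obstacle; the only point that genuinely needs care is making the numerical constant come out exactly as $2^{3d/2-1}$, which is why one must invoke the elementary bound $\prod_i\lambda_i\ge\det$ in (b), rather than the lossy lower half of Minkowski's second theorem, and the sharp transference $\lambda_d\le 2\tau$ in (c). With those in hand the rest is routine bookkeeping.
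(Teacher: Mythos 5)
Your proposal is correct and follows essentially the same route as the paper's proof: use Corollary~\ref{sim} with $\delta=1/q$ to bound $N({\ve \xi}_{add})$ from above, observe $m_{add}\ge\lambda_1(L_{\ve \nu})\ge 1/q$, and bound $\lambda_1$ from below via $\prod_i\lambda_i\ge\det(L_{\ve \nu})=1/q$ combined with the Jarnik-type inequality $\lambda_d\le 2\tau(L_{\ve \nu})$. The only cosmetic differences are that you make the identity $N({\ve x})=\|{\ve x}-{\ve e}_d\|$ explicit to justify $m_{add}\ge\lambda_1$ (which the paper asserts without comment) and you label the bound $\prod_i\lambda_i\ge\det$ as the elementary Hadamard/index argument rather than, as the paper does, as part of ``Minkowski's second theorem for spheres.''
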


\begin{proof}
Recall that $L_{\ve \nu} = \Z^d + \Z {\ve \nu}$. Therefore for the first successive minimum $\lambda_1=\lambda_1(L_{\ve \nu})$ we obtain the inequalities
$1/q\le \lambda_1 \le m_{add}({\ve \nu})$. Together with (\ref{alg_bound_for_xi_add}) this observation implies the inequality
\be
\frac{N({\ve \xi}_{add}({\ve \nu}, 1/q))}{m_{add}({\ve \nu})}< \frac{(1+\sqrt{d})2^{d/2}\tau(L_{\ve \nu})}{\lambda_1}+1\,.
\label{via_lambda_1}
\ee
By Minkowski's Second theorem for spheres, $1/q=\det(L_{\ve \nu})\le \lambda_1\lambda_2\cdots\lambda_d$ and hence
\be
\lambda_1\ge \frac{1}{q \lambda_d^{d-1}}\,.
\label{Minkowski_spheres}
\ee
Next, by Jarnik's inequalities (cf.~\cite[p.~99, p.~106]{GrLek})) we have $\lambda_d\le 2 \tau(L_{\ve \nu})$. Consequently, by (\ref{Minkowski_spheres})
\be
\lambda_1\ge \frac{1}{2^{d-1}q \tau(L_{\ve \nu})^{d-1}}\,.
\label{lambda_1_via_tau}
\ee
Combining (\ref{via_lambda_1}) and (\ref{lambda_1_via_tau}), we obtain the inequality (\ref{approx_error_inequ}).

\end{proof}

Proposition \ref{approx_error} immediately implies the inequality $N({\ve \xi}_{add}({\ve \nu}, 1/q))< 2^{O(d)}m_{add}({\ve \nu})$,
provided $\tau(L_{\ve \nu})\ll q^{-1/d}$.

A natural step towards establishing both conjectures would be to show that the approximation error is independent of the common denominator $q$.
In this light, Proposition \ref{approx_error}, together with Theorem \ref{bound_for_P} imply that for a {\em typical} input vector ${\ve \nu}$ the problem $(\ref{SCP})$ can be approximated with the multiplicative approximation error that only depends on $d$.

\section{Concluding Remarks} \label{se:end}

Although Chv\'atal-Gomory cuts have been around for over 50 years and have been studied in depth,
many important questions about them remain unanswered. We have studied the behavior of the iterated CG-cuts for a randomly chosen
augmented vector and have shown the existence of a polynomial-time algorithm that computes approximations for the problems \ref{SM} and \ref{SCP}.
For computed approximations the values of $r({\ve \xi})$ and $N({\ve \xi})$ are bounded in terms of the covering radius and thus are small for a typical augmented vector. On the other
hand, we do not know the precise approximation ratio that this algorithm yields. Nor do we know
the precise approximability (or inapproximability) status of the problems \ref{SM} and \ref{SCP}.
Moreover, our algorithm seems at present of mainly theoretical interest, though this may change
in the near future, given the intensive recent work on algorithms for integer lattices (see the
survey \cite{HSP}).

We also remark that the strategy presented in this paper is designed to optimize individual CG-cuts
only. On the other hand, since the work of Balas {\em et al.}\ \cite{BCCN96}, most integer programmers
prefer to work with collections of cutting planes rather than individual ones. (Specifically, given
a fractional simplex tableau, one can generate one GF-cut for each fractional variable, and add all
such GF-cuts to the LP relaxation.) It is not clear that optimising each CG-cut in a collection will
improve the effectiveness of the entire collection. Indeed, in our computational experiments, we often
observed that different CG-cuts led to the same strengthened iterated CG-cut, so that a large collection
of weak CG-cuts was converted into a small collection of strong ones. This suggests that a suitable
topic for future research might be the simultaneous optimization of a collection of CG-cuts.
A method for strengthening a collection of Gomory {\em mixed-integer}\/ cuts, rather than
GF-cuts, was presented in \cite{ACL05}.

\end{document}